\numberwithin{equation}{section}
\theoremstyle{plain}
\date{}
\DeclareMathOperator{\conv}{conv}
\DeclareMathOperator{\lin}{lin}
\DeclareMathOperator{\vol}{vol}
\DeclareMathOperator{\inte}{int}
\newtheorem{twr}{Theorem}
\newtheorem{lem}[twr]{Lemma}
\theoremstyle{remark}
\newtheorem{remark}[twr]{Remark}
\renewcommand\@seccntformat[1]{\csname the#1\endcsname.\quad}
\begin{document}

\title[Extremal Banach-Mazur distance on the plane]{Extremal Banach-Mazur distance between a symmetric convex body and an arbitrary convex body on the plane}

\author{Tomasz Kobos}
\thanks{The research of the author was supported by Polish National Science Centre grant 2014/15/N/ST1/02137}

\address{Faculty of Mathematics and Computer Science \\ Jagiellonian University \\ Lojasiewicza 6, 30-348 Krakow, Poland}

\email{Tomasz.Kobos@im.uj.edu.pl}

\subjclass{Primary 52A40, 52A10, 52A27}
\keywords{Banach-Mazur distance, Gr\"unbaum distance, convex body, John's decomposition}

\begin{abstract}
We prove that if $K, L \subset \mathbb{R}^2$ are convex bodies such that $L$ is symmetric and the Banach-Mazur distance between $K$ and $L$ is equal to $2$, then $K$ is a triangle. 
\end{abstract}

\maketitle

\section{Introduction}

A set $K \subset \mathbb{R}^n$ is called a \emph{convex body} if it is compact, convex and with non-empty interior. A convex body $K$ is \emph{centrally-symmetric} (or just \emph{symmetric}) if it has a center of symmetry. By $\partial K$ we will denote the boundary of a convex body $K$. For a vector $z \in \mathbb{R}^n$ let $K_z = K-z$ be a shift of $K$. If $0$ is an interior point of $K$, then we define the \emph{polar body} $K^{\circ}$ of $K$, as
$$K^{\circ} = \{ x \in \mathbb{R}^n \ : \ \langle x, y \rangle \leq 1 \text{ for every } y \in K \},$$
where $\langle \cdot, \cdot \rangle$ denotes the usual scalar product in $\mathbb{R}^n$. 

The \emph{Banach-Mazur distance} is a fundamental notion of the geometry of Banach spaces and of the convex geometry. It is a multiplicative distance between normed spaces of the same dimension or a multiplicative distance between arbitrary convex bodies of the same dimension. We shall work in the context of convex bodies and in this case it is defined as
$$d(K, L) = \inf \{r>0: K + u \subset T(L+v) \subset r(K+u)\}.$$
where the infimum is taken over all invertible linear operators $T: \mathbb{R}^n \to \mathbb{R}^n$ and $u, v \in \mathbb{R}^n$.

As the Banach-Mazur distance has many interesting and important applications in the fields of convex and discrete geometry and also in functional analysis, this notion has been studied extensively for years by many authors. From the famous John's ellipsoid theorem (see \cite{john}) it immediately follows that $d(K, L) \leq n$ for all symmetric convex bodies $K, L \subset \mathbb{R}^n$. Gluskin in \cite{gluskin}, using a sophisticated probabilistic construction, was able to show that this bound has asymptotically the right order: there exists a constant $c>0$ such that, for every integer $n \geq 1$, there are centrally symmetric convex bodies $K_n, L_n \subset \mathbb{R}^n$ satisfying $d(K_n, L_n) \geq c n$. The exact value of the diameter for the symmetric case is known only for $n=2$ and it is equal to $\frac{3}{2}$ by a result of Stromquist (see \cite{stromquist}).

Much less is known in the non-symmetric setting. John's ellipsoid theorem yields the estimate $d(K, L) \leq n^2$ for all convex bodies $K, L \subset \mathbb{R}^n$. Rudelson in \cite{rudelson} has proved that the asymptotic order of this bound can be improved a lot. The exact value of the diameter is not known even for $n=2$, but Lassak proved that $d(K, L) \leq 3$ for all convex bodies $K, L \subset \mathbb{R}^2$ (see \cite{lassakdiameter}). This estimate was recently improved by Brodiuk, Palko and Prymak to $\frac{19-\sqrt{73}}{4} \approx 2.614$ (see \cite{brodiuk}). The best known lower bound is only linear in $n$. Challenges that are faced when dealing with the Banach-Mazur distance are well illustrated by the fact, that the exact distance between the three-dimensional cube and the octahedron is not known. It seems very likely and is supported by computational results that this distance is equal to $\frac{9}{5}$ (see \cite{xue}), but the proof is yet to be found.

In light of these difficulties, it may come as a surprise that it is possible to determine explicitly the maximal possible distance between a symmetric convex body and an arbitrary convex body in every dimension $n$. In 1963 Gr\"unbaum introduced a following variation of the Banach-Mazur distance for arbitrary convex bodies $K, L \subset \mathbb{R}^n$: 
$$d_G(K, L) = \inf \{ |r| \ : \ K' \subset L' \subset rK' \}$$ 
with the infimum taken over all non-degenerate affine images $K'$ and $L'$ of $K$ and $L$ respectively. Therefore, instead of sandwiching the affine copy of $L$ between two positive homothets of $K$, we may use also the negative homothets as well. It is clear that $d_G(K, L) = d(K, L)$, if $K$ or $L$ is symmetric. Gr\"unbaum has conjectured that the maximal possible distance is equal to $n$. It was confirmed more than $40$ years later by Gordon, Litvak, Meyer and Pajor (see \cite{gordon}), who gave a proof based on the \emph{John's decomposition in the general case}, that shall be discussed in the next section of the paper. It is also not hard to prove that if $L$ is a symmetric convex body and $S_n$ is a simplex in $\mathbb{R}^n$, then $d_G(L, S_n)=d(L, S_n)=n$. A natural question arises: is it true that if $L$ is symmetric and $d(K, L)=n$, then is $L$ a simplex? Or even more generally, if $d_G(K, L)=n$ for $K, L$ arbitrary, then is one of $K$ and $L$ a simplex? The more general question is a conjecture of Jim\'{e}nez and Nasz\'{o}di stated in \cite{jimenez}. These authors have managed to establish a set of conditions that has to be satisfied in the case of the equality $d_G(K, L)=n$. Based on them, they proved that if $L$ is strictly convex or smooth (and is not necessarily symmetric) and $d_G(K, L)=n$, then $K$ is a simplex in $\mathbb{R}^n$. Their result is a broad generalization of a result of Leichtweiss who proved it in a special case of $L$ being an Euclidean ball in $\mathbb{R}^n$ (see \cite{leichtweiss}). A different proof was provided much later by Palmon in \cite{palmon}.

The goal of the paper is to provide a proof of the planar case of the conjecture of Jim\'{e}nez and Nasz\'{o}di under the assumption that $L$ is symmetric. Our main result goes as follows

\begin{twr}
\label{twglowne}
Let $K$ and $L$ be convex bodies in the plane such that $L$ is symmetric. If $d(K, L)=2$, then $K$ is a triangle. 
\end{twr}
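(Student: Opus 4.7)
The plan is to exploit the general John's decomposition for the pair $(K,L)$ in an extremal position, combined with the central symmetry of $L$ and the planar setting, to force $K$ to have exactly three extreme points.

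First I normalize the position: by the definition of the Banach--Mazur distance, after a suitable affine transformation of $L$ and a translation we may assume that $K \subset L \subset 2K$ with $L$ centrally symmetric about the origin. In particular $0 \in K$, and the hypothesis $d(K,L) = 2$ means that this sandwich cannot be tightened. It is precisely this extremality that makes a John-type identity available.

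Next I apply the general John's decomposition (which, following Gordon, Litvak, Meyer and Pajor, will be set up in the next section of the paper) to the pair $(K,L)$. This yields finitely many inner contact points $p_i \in \partial K \cap \partial L$ with common outer unit normals $a_i$ (shared by $K$ and $L$ at $p_i$), outer contact points $q_j \in \partial L$ with $q_j/2 \in \partial K$ and common outer unit normals $b_j$ (shared by $L$ at $q_j$ and $K$ at $q_j/2$), together with positive weights $\alpha_i, \beta_j$ satisfying a resolution of the identity on $\mathbb{R}^2$ and a zero-sum condition of the form $\sum_i \alpha_i a_i + \sum_j \beta_j b_j = 0$. The sharpness of the bound $d = n = 2$ forces additional equality constraints, tying the inner and outer contacts together.

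I then use the central symmetry of $L$: for every $p_i$ the reflected point $-p_i$ lies in $\partial L$ with outer $L$-normal $-a_i$, and similarly for $q_j$. Since $K \subset L \subset 2K$, each such reflected point must sit either on $\partial K$ (a further inner contact) or on $\partial(2K)$ (a further outer contact), producing extra contact data of the same two kinds. This symmetrisation pairs the normals as $a_i \leftrightarrow -a_i$ and $b_j \leftrightarrow -b_j$, severely restricting their possible angular positions on the unit circle. A short planar analysis of which symmetric configurations of normals can simultaneously satisfy the identity resolution and the zero-sum condition shows that exactly three distinct inner-normal directions can occur; these pin down three vertices of $K$, while the outer contacts force the three connecting arcs of $\partial K$ to be straight segments, so $K$ must be a triangle.

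The main obstacle is the planar case analysis that rules out all non-triangular configurations: that $K$ has four or more extreme points, that $\partial K$ has a smooth arc meeting $\partial L$, or that the contact data consists solely of inner (or solely of outer) contacts. Each such possibility has to be excluded by combining the John-type identity, the zero-sum relation, the sandwich $K \subset L \subset 2K$ and the symmetry $L = -L$; the delicate bookkeeping of the angular positions of the normals $a_i$ and $b_j$, and of how they pair under reflection, is where the bulk of the technical work is expected to lie.
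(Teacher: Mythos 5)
Your starting point --- placing $K$ and $L$ in an extremal position and invoking the general John's decomposition of Gordon, Litvak, Meyer and Pajor --- is the same as the paper's, but the argument you build on top of it has genuine gaps. The most concrete one is the symmetrisation step: if $p \in \partial K \cap \partial L$ is a contact point and $o$ is the centre of $L$, the reflected point $2o-p$ does lie on $\partial L$, but there is no reason for it to lie on $\partial K$ or on $\partial(2K)$; generically it sits strictly between those two boundaries. So reflection does not produce new contact data, and the pairing of normals $a_i \leftrightarrow -a_i$ on which your angular analysis rests is simply not available. (The paper uses the symmetry of $L$ only once, in a single residual subcase, and in a different way: to show that $L$ must there be a parallelogram, after which it quotes Balla's result.) A second structural issue is the homothety: the GLMP machinery sandwiches $L$ between $K$ and the \emph{negative} homothet $-2K$, and the equality conditions of Lemma \ref{conditions} concern $\partial L \cap (-2\,\partial K)$; normalising to $K \subset L \subset 2K$ as you do forfeits access to exactly the conditions that make the extremal case tractable.

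Beyond that, your conclusion that ``exactly three distinct inner-normal directions can occur'' is asserted rather than proved, and it is false as stated: in the plane the John position of $K$ in $L$ can have four contact pairs (for instance when both bodies are quadrilaterals), and this is precisely where the difficulty lies. The paper shows $3 \le m \le 4$ and then runs a long case analysis on $m$ and on the number of contact points of $\partial L$ with $\partial(-2K)$; the four-pair cases need, among other things, Lassak's theorem on the Banach--Mazur distance between quadrangles and explicit small affine perturbations of $L$ that strictly reduce the set of contact points, contradicting the criterion $0 \in \conv(\partial K^{\circ} \cap \partial L^{\circ})$ of Lemma \ref{convex}. Your sketch contains no contradiction mechanism of this kind; the ``short planar analysis'' you defer to is in fact the entire content of the theorem.
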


We were not able to prove in full generality that if $K, L$ are arbitrary and $d_G(K, L)=2$, then one of $K$ and $L$ is a triangle. However, we have managed to prove this in many different configurations, depending on the John's decomposition for $K$ and $L$. The full statement of our main result is quite technical and is presented in details at the beginning of the proof. We remark that, besides the result of Jim\'{e}nez and Nasz\'{o}di for strictly convex and smooth bodies, the only single case of our theorem that we know that was established before, is the case of $L$ being a parallelogram. It was proved by Lassak in \cite{lassakpar}.

The paper is organized as follows. In Section \ref{johnpoz} we introduce the John's decomposition in general case, that is the starting point of the proof of Theorem \ref{twglowne}. Proof of our main result is presented Section \ref{dowody}. In the last section of the paper we provide some concluding remarks.

\section{John's position of convex bodies}
\label{johnpoz}

Let $K, L \subset \mathbb{R}^n$ be arbitrary convex bodies. We say that $K$ is in a \emph{position of maximal volume} in $L$ if $K \subset L$ and for all affine images $K'$ of $K$ contained in $L$ we have $\vol(K') \leq \vol(K)$. A classical technique used for upper-bounding the Banach-Mazur distance between two convex bodies, is to consider a position of maximal volume. The result of Gordon, Litvak, Meyer and Pajor provides us a set of precise conditions on so-called \emph{contact pairs} when we consider a position of maximal volume. Their result is a broad generalization of the original line of reasoning of John in the Euclidean case. If $K, L \subset \mathbb{R}^n$ are convex bodies we say that $K$ is in \emph{John's position} in $L$ if $K \subset L$, and
\begin{itemize}
\item $x = \sum_{i=1}^{m} a_i \langle x, u_i \rangle v_i$ for every $x \in \mathbb{R}^n$,
\item $0 =  \sum_{i=1}^{m} a_i u_i =  \sum_{i=1}^{m} a_i v_i,$
\item $\langle u_i, v_i \rangle = 1$ for $1 \leq i \leq m$,
\end{itemize}
for some integer $m>0$, $\{u_i: 1 \leq i \leq m \} \subset \partial K \cap \partial L$, $\{v_i: 1 \leq i \leq m \} \subset \partial K^{\circ} \cap \partial L^{\circ}$ and positive $a_i$'s. We shall call $(u_i, v_i)$ the \emph{contact pairs}. It is easy to prove that these conditions imply that $a_1+a_2 + \ldots + a_m = n$.

These two positions of convex bodies are related by the following result:
\begin{twr}[Gordon, Litvak, Meyer, Pajor \cite{gordon}]
\label{decomposition}
If $K, L \subset \mathbb{R}^n$ are convex bodies such that $K$ is in a position of maximal volume in $L$ and $0 \in \inte K$, then there exists $z \in \frac{n}{n+1}K$ such that $K- z$ is in John's position in $L-z$ with $m \leq n^2 + n$.
\end{twr}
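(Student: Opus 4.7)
The plan is to mimic John's original variational argument, generalized to the setting where neither $K$ nor $L$ is assumed symmetric.

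First I would encode the maximality of $\vol(K)$ via first-order affine perturbations. For any $A \in \text{End}(\mathbb{R}^n)$ and $b \in \mathbb{R}^n$, set $\Phi_\epsilon(x) = (I+\epsilon A)x + \epsilon b$; then $\vol(\Phi_\epsilon(K)) = (1 + \epsilon\operatorname{tr}(A) + O(\epsilon^2))\vol(K)$. For a contact pair $u \in \partial K \cap \partial L$ with common outward normal $v$ (normalized so that $\langle u, v\rangle = 1$, i.e.\ $v \in \partial K^\circ \cap \partial L^\circ$), the inclusion $\Phi_\epsilon(u) \in L$ requires to first order $\langle Au + b, v\rangle \leq 0$; at non-contact boundary points of $K$ the inclusion is automatic for small $\epsilon$. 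Maximality of volume therefore translates into the implication: $\langle Au_i + b, v_i\rangle \leq 0$ for every contact pair implies $\operatorname{tr}(A) \leq 0$.

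Second, I would convert this implication into an algebraic identity by a Farkas/Hahn--Banach duality argument in the $(n^2+n)$-dimensional space $\text{End}(\mathbb{R}^n) \oplus \mathbb{R}^n$: the linear functional $(A,b) \mapsto \operatorname{tr}(A)$ lies in the convex cone spanned by the functionals $(A,b) \mapsto \langle Au + b, v\rangle$ indexed by contact pairs, and Carath\'eodory's theorem reduces the decomposition to $m \leq n^2+n$ terms with positive weights $a_i$ satisfying
\[
\operatorname{tr}(A) = \sum_{i=1}^{m} a_i \langle A u_i + b, v_i\rangle \qquad \text{for every } (A, b).
\]
Matching the coefficient of $b$ forces $\sum a_i v_i = 0$; matching the coefficient of $A$ gives the identity decomposition $\sum a_i u_i v_i^{\tr} = I$, equivalent to $x = \sum a_i \langle v_i, x\rangle u_i$; and taking the trace of this identity together with $\langle u_i, v_i\rangle = 1$ delivers the normalization $\sum a_i = n$.

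The variational step is origin-dependent and yields only one of the two centering relations in John's position, namely $\sum a_i v_i = 0$. I would recover the other by translation: define $z = \frac{1}{n}\sum a_i u_i$ and work in the shifted frame $K-z \subset L-z$. The shifted contact points $u_i - z$ then have vanishing weighted sum by construction, and rescaling each $v_i$ by $1/(1-\langle z, v_i\rangle)$ (with a matching rescaling of the weights $a_i$) restores $\langle u_i - z, v_i'\rangle = 1$ and the identity relation in the shifted frame, so that all four axioms of John's position hold simultaneously. The main obstacle is the sharper quantitative inclusion $z \in \tfrac{n}{n+1}K$: the crude bound $z \in K$ is immediate because $z$ is a convex combination of $u_i \in K$ with weights $a_i/n$, but the improved factor $\tfrac{n}{n+1}$ must exploit the full structure of the contact-pair identities together with the hypothesis $0 \in \inte K$, and it is sharp in the limit of a simplex whose origin is driven toward a vertex. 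I would expect this final step to be cleanest via the lift to the $(n+1)$-dimensional cone $\widehat{K} = \{(x,1) : x \in K\} \subset \mathbb{R}^{n+1}$, in which affine constraints on $K$ become linear constraints on $\widehat{K}$ and the factor $n/(n+1)$ surfaces as the natural ratio between the affine slice $\{x_{n+1}=1\}$ and the surrounding cone.
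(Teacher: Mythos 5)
The paper does not actually prove this statement: it is imported verbatim from Gordon--Litvak--Meyer--Pajor \cite{gordon}, so your proposal can only be measured against their argument. Your first two steps reproduce that argument faithfully: the first-order condition at the volume maximizer, the Farkas-type duality giving $\operatorname{tr}(A)=\sum_i a_i\langle Au_i+b,v_i\rangle$ for all $(A,b)$, and Carath\'eodory's theorem in the $(n^2+n)$-dimensional cone, yielding $m\le n^2+n$ together with $\sum a_iv_i=0$, $\sum a_iu_iv_i^{\tr}=I$ and $\sum a_i=n$. (You gloss over the closedness of the cone of contact functionals, which Farkas requires; it follows from compactness of the set of contact pairs plus the fact that every contact functional takes the value $\langle u,v\rangle=1$ at $(A,b)=(I,0)$, so $0$ is not in their convex hull. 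That is standard.)

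The translation step, however, contains a genuine error, and it sits exactly where you predict the difficulty to be. Write $t_i=1-\langle z,v_i\rangle$. To keep $\langle u_i-z,v_i'\rangle=1$ you must take $v_i'=v_i/t_i$, and to preserve both $\sum a_i'v_i'=0$ and the identity decomposition $\sum a_i'(u_i-z)(v_i')^{\tr}=I$ you must take $a_i'=a_it_i$. With these forced choices,
\begin{equation*}
\sum_i a_i'(u_i-z)\;=\;\sum_i a_i\bigl(1-\langle z,v_i\rangle\bigr)(u_i-z)\;=\;\sum_i a_iu_i \;-\; nz \;-\; z \;+\; \Bigl\langle z,\sum_i a_iv_i\Bigr\rangle z\;=\;\sum_i a_iu_i-(n+1)z,
\end{equation*}
where $\sum_i a_i\langle z,v_i\rangle u_i=z$ comes from the identity decomposition and the last bracket vanishes. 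Hence the shifted $u$-centering holds if and only if $z=\frac{1}{n+1}\sum a_iu_i$, not $\frac1n\sum a_iu_i$ as you propose; with your choice of $z$ the four conditions of John's position are simultaneously satisfiable only when $z=0$. Once this is corrected, the ``main obstacle'' you identify evaporates: $z=\frac{n}{n+1}\cdot\frac1n\sum a_iu_i$, and $\frac1n\sum a_iu_i$ is a convex combination of points of $K$, so $z\in\frac{n}{n+1}K$ immediately (and $t_i\ge 1-\frac{n}{n+1}>0$, so the rescaling is legitimate). No lift to the cone over $K$ is needed; the factor $\frac{n}{n+1}$ is nothing more than the correct choice of translation.
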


Based on the John's decomposition theorem, Gordon, Litvak, Meyer and Pajor were able to give a very short and elegant proof of the following result.

\begin{twr}[Gordon, Litvak, Meyer, Pajor \cite{gordon}]
\label{glmp}
Let $K, L \subset \mathbb{R}^n$ be convex bodies such that $K \subset L$ are in a John's position. Then $L \subset - nK$.
\end{twr}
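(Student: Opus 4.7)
The plan is to fix an arbitrary point $y \in L$ and show directly that $-y/n$ belongs to $K$, which is equivalent to the target inclusion $L \subset -nK$. Since each contact point $u_i$ lies in $K$ and $K$ is convex, it suffices to exhibit $-y/n$ as a convex combination of the $u_i$'s. So the natural strategy is to produce such a representation out of the John's-position data.

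First I would observe that the resolution of the identity can be read in two ways. The condition $x = \sum_{i=1}^{m} a_i \langle x, u_i \rangle v_i$ expresses $I$ as the sum of the rank-one operators $x \mapsto \langle x, u_i \rangle v_i$; transposing gives the dual identity $x = \sum_{i=1}^{m} a_i \langle x, v_i \rangle u_i$, which is what I need since it represents any vector as a linear combination of the $u_i$. Taking the trace of the original identity also yields $\sum a_i = n$, a fact I will use to normalize.

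Applying the transposed identity to $-y$ and dividing by $n$ produces
\[
-\frac{y}{n} = -\frac{1}{n}\sum_{i=1}^{m} a_i \langle y, v_i \rangle u_i.
\]
To make the coefficients manifestly nonnegative, I would add $\frac{1}{n}\sum a_i u_i = 0$ (using the centering condition on the $u_i$) and rewrite the right-hand side as $\sum_i c_i u_i$ with $c_i = \tfrac{a_i}{n}\bigl(1 - \langle y, v_i \rangle\bigr)$. Now each ingredient of the hypothesis plays a single clean role: $v_i \in \partial L^\circ$ together with $y \in L$ forces $\langle y, v_i \rangle \leq 1$, hence $c_i \geq 0$; and $\sum a_i = n$ together with the centering condition $\sum a_i v_i = 0$ gives $\sum c_i = 1$. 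Thus $-y/n$ is a convex combination of points of $K$, and the conclusion follows.

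I do not anticipate a real obstacle here: the difficulty is purely in recognizing that the transposed identity is the right tool and that the three John's-position conditions (resolution of identity, centering, and polarity) combine in exactly the right way. The proof is essentially a one-line computation once the representation is set up, which is consistent with the paper's description of the argument as short and elegant.
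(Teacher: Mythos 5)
Your proof is correct and is essentially the original Gordon--Litvak--Meyer--Pajor argument that the paper cites without reproducing: transpose the resolution of the identity, write $-y/n=\sum_i \frac{a_i}{n}\bigl(1-\langle y,v_i\rangle\bigr)u_i$, and check via $v_i\in L^\circ$, $\sum a_i=n$ and $\sum a_iv_i=0$ that this is a convex combination of the contact points $u_i\in K$. All steps are justified; nothing to add.
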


By combining these two results above one gets the inequality $d_G(K, L) \leq n$ for arbitrary convex bodies $K, L \subset \mathbb{R}^n$, which confirms the conjecture of Gr\"unbaum.

\section{Proof of Theorem \ref{twglowne}}
\label{dowody}

To characterize the equality in the inequality $d_G(K, L) \leq n$ for $L$ strictly convex or smooth, Jim\'{e}nez and Nasz\'{o}di have carefully followed the estimates of Gordon, Litvak, Meyer and Pajor made in the proof of Theorem \ref{glmp}, to derive a set of conditions that have to hold in the case of equality. We state them in the form of a lemma.

\begin{lem}
\label{conditions}
Let $K, L \subset \mathbb{R}^n$ be convex bodies such that $K \subset L$ are in a John's position with contact pairs $(u_i, v_i)_{i=1}^{m}$. Assume that $x \in \partial L \cap (-n \partial K)$. Let $w \in \partial K^{\circ}$ be any vector such that $\langle x, w \rangle = -n$. Let $A = \{i: \langle u_i, w \rangle < 1 \}$ and $B = \{i: \langle u_i, w \rangle = 1 \}$ (thus $A \cup B = \{1 ,2, \ldots, m\}$). Then we have
\begin{equation}
\label{convu}
\frac{-x}{n} \in \conv \{ u_i : i \in B \},
\end{equation}
\begin{equation}
\label{convv}
\frac{-w}{n} \in \conv \{ v_i : i \in A \},
\end{equation}
\begin{equation}
\label{xv}
\langle x, v_i \rangle = 1 \text{ for all } i \in A.
\end{equation}

\end{lem}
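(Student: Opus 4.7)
The plan is to start from the elementary inequality $(1 - \langle w, u_j \rangle)(1 - \langle v_j, x \rangle) \ge 0$, valid for each $j$ since $w \in K^{\circ}$ and $u_j \in K$ give $\langle w, u_j \rangle \le 1$, while $x \in L$ and $v_j \in L^{\circ}$ give $\langle v_j, x \rangle \le 1$. I would multiply by $a_j$ and sum over $j$. Applying the John identity decomposition to the vector $w$ yields $w = \sum_j a_j \langle w, u_j \rangle v_j$, so pairing with $x$ gives $\sum_j a_j \langle w, u_j \rangle \langle v_j, x \rangle = \langle w, x \rangle = -n$. Combined with $\sum_j a_j u_j = \sum_j a_j v_j = 0$ and $\sum_j a_j = n$, the summed inequality collapses to $0 \ge 0$.

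Since each summand is nonnegative and the weights $a_j$ are strictly positive, equality must hold pointwise: for every $j$, either $\langle w, u_j \rangle = 1$ or $\langle v_j, x \rangle = 1$. When $j \in A$, the first alternative is excluded by the definition of $A$, which immediately yields (\ref{xv}).

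For (\ref{convu}), I would invoke the transposed John identity $x = \sum_j a_j \langle x, v_j \rangle u_j$, obtained by transposing the linear operator encoded in the first bullet of John's position. Substituting $\langle x, v_j \rangle = 1$ for $j \in A$ and using $\sum_j a_j u_j = 0$ to cancel the $A$-part, one obtains
\[-\frac{x}{n} = \sum_{j \in B} \frac{a_j (1 - \langle x, v_j \rangle)}{n}\, u_j,\]
with every coefficient nonnegative. To check that these coefficients sum to $1$, I would compute $\sum_{j \in B} a_j \langle x, v_j \rangle$ by subtracting the known $A$-contribution from $\sum_j a_j \langle x, v_j \rangle = \langle \sum_j a_j v_j, x \rangle = 0$; together with $\sum_{j \in A} a_j + \sum_{j \in B} a_j = n$, the required normalisation falls out. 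Statement (\ref{convv}) then follows by the entirely symmetric computation, starting from $w = \sum_j a_j \langle w, u_j \rangle v_j$, substituting $\langle w, u_j \rangle = 1$ for $j \in B$, and using $\sum_j a_j v_j = 0$.

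I expect no serious obstacle beyond careful bookkeeping; the main subtlety is recognising that the transposed form of the John identity is essential for (\ref{convu}) and (\ref{convv}), even though only the original form is written out in the definition of John's position.
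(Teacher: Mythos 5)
Your proof is correct: the pointwise inequality $(1-\langle w,u_j\rangle)(1-\langle v_j,x\rangle)\ge 0$, the collapse of the weighted sum to $0$ via $\langle w,x\rangle=-n$, and the use of the transposed identity $y=\sum_j a_j\langle y,v_j\rangle u_j$ all check out, and the normalisation of the convex coefficients to $1$ works exactly as you describe. The paper itself gives no argument here (it only cites Proposition 10 of Jim\'{e}nez and Nasz\'{o}di), and your derivation is essentially the standard one from that source, i.e.\ tracking the equality case in the Gordon--Litvak--Meyer--Pajor estimate.
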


\begin{proof}
See Proposition $10$ in \cite{jimenez}.
\end{proof}

The following two lemmas also come directly from the paper of Jim\'{e}nez and Nasz\'{o}di. Especially the second lemma will be of special importance to us.

\begin{lem}
\label{johnconvex}
Let $K, L \subset \mathbb{R}^n$ be convex bodies such that $K \subset L$ are in a John's position. Then, the contact pairs $(u_i, v_i)_{i=1}^{m}$, appearing in the definition of John's position (see Section \ref{johnpoz}), can be choosed in such a way that
$$u_i \not \in \conv \{ u_1,  \ldots, u_{i-1}, u_{i+1}, \ldots, u_m \} \text{ and } v_i \not \in \conv \{ v_1, \ldots, v_{i-1}, v_{i+1}, \ldots, v_m \},$$ 
for every $1 \leq i \leq m$.
\end{lem}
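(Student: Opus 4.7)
The plan is to start from an arbitrary John's decomposition for $K \subset L$ and iteratively eliminate redundant contact pairs until neither the $u_i$'s nor the $v_i$'s admit a redundancy. Concretely, suppose $(u_i,v_i)_{i=1}^{m}$ is a valid system with positive weights $a_i$, and that one of the $u_i$'s — relabel it as $u_m$ — lies in $\conv\{u_1,\ldots,u_{m-1}\}$, say $u_m = \sum_{j=1}^{m-1}\mu_j u_j$ with $\mu_j \ge 0$ and $\sum_j \mu_j = 1$. The aim is to produce a John's decomposition with only $m-1$ pairs by absorbing $v_m$ into the remaining $v_j$'s.

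The natural ansatz is to set
\[
b_j = a_j + a_m\mu_j, \qquad w_j = \frac{a_j v_j + a_m\mu_j v_m}{b_j} \qquad (1 \le j \le m-1),
\]
so that each $w_j$ is a convex combination of $v_j$ and $v_m$. Substituting $u_m = \sum_j \mu_j u_j$ into the identity $\sum_i a_i v_i\otimes u_i = I$ gives $\sum_{j=1}^{m-1} b_j w_j \otimes u_j = I$, and the two zero-sum conditions $\sum b_j u_j = 0$ and $\sum b_j w_j = 0$ follow by direct algebra from the corresponding conditions for $(u_i,v_i,a_i)_{i=1}^{m}$. The main obstacle — and really the whole content of the argument — is verifying the two remaining requirements: that $\langle u_j, w_j\rangle = 1$ and that $w_j \in \partial K^{\circ}\cap\partial L^{\circ}$.

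Both follow from the same observation. Since $v_m \in \partial K^{\circ}\cap\partial L^{\circ}$ and each $u_j \in \partial K\cap\partial L \subset K \cap L$, one has $\langle u_j, v_m\rangle \le 1$. But $1 = \langle u_m, v_m\rangle = \sum_j \mu_j \langle u_j, v_m\rangle$, so every $j$ with $\mu_j > 0$ must in fact satisfy $\langle u_j, v_m\rangle = 1$. This immediately yields $\langle u_j, w_j\rangle = 1$. Moreover, for any such $j$, both $v_j$ and $v_m$ lie on the hyperplane $H_j = \{w : \langle u_j, w\rangle = 1\}$, which is a supporting hyperplane of both $K^{\circ}$ and $L^{\circ}$ (because $u_j \in K \cap L$); hence the convex combination $w_j$ lies on $H_j$ and therefore on $\partial K^{\circ}\cap\partial L^{\circ}$. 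When $\mu_j = 0$ the construction is trivial, since $w_j = v_j$.

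By symmetry (swapping the roles of the two polar bodies), the analogous reduction applies whenever some $v_i$ is a convex combination of the other $v_j$'s: one absorbs $u_i$ into the remaining $u_j$'s. Each such reduction strictly decreases $m$, so after finitely many steps the process must terminate, and termination means precisely that neither the $u_i$'s nor the $v_i$'s contain a member of the convex hull of the others, which is the desired property.
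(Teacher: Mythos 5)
Your proof is correct: the merging step (writing $u_m=\sum_j\mu_j u_j$, deducing $\langle u_j,v_m\rangle=1$ for $\mu_j>0$ from the fact that a convex combination of quantities $\le 1$ equals $1$, and then absorbing $a_mv_m$ into the $v_j$'s while preserving the decomposition identity, the zero-sum conditions, the normalization $\langle u_j,w_j\rangle=1$, and membership in $\partial K^{\circ}\cap\partial L^{\circ}$ via the supporting hyperplane $\{w:\langle u_j,w\rangle=1\}$) is exactly the standard reduction. The paper itself gives no proof here — it only cites Proposition 10 of Jim\'enez and Nasz\'odi — and your argument is essentially the one used there, so nothing further is needed.
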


\begin{proof}
See Proposition $10$ in \cite{jimenez}.
\end{proof}

\begin{lem}
\label{convex}
Let $K, L \subset \mathbb{R}^n$ be convex bodies such that $K \subset L$, $0 \in \inte L$ and $K \not \subset rL+v$ for any $0 < r < 1$ and $v \in \mathbb{R}^n$. Then $0 \in \conv(\partial K^{\circ} \cap \partial L^{\circ}).$
\end{lem}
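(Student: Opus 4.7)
The plan is to argue by contradiction using the Hahn--Banach separation theorem. Suppose $0 \notin \conv(\partial K^\circ \cap \partial L^\circ)$. Since $0 \in \inte L$, the polar body $L^\circ$ is bounded, so $\partial K^\circ \cap \partial L^\circ$ is compact and its convex hull is a compact convex set not containing the origin. Separation then yields a vector $v \in \mathbb{R}^n$ and $\delta > 0$ such that $\langle v, y \rangle \geq \delta$ for every $y \in \partial K^\circ \cap \partial L^\circ$. The objective is to produce $t > 0$ and $r \in (0,1)$ with $K \subset rL + tv$, which would contradict the hypothesis.

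To translate containment into an analytic statement I would work with support functions: $K \subset rL + tv$ is equivalent to $h_K(y) - t\langle v, y \rangle \leq r\, h_L(y)$ for every $y \in \mathbb{R}^n$, and by positive homogeneity it suffices to verify this on $\partial L^\circ$, where $h_L \equiv 1$. Hence the task reduces to exhibiting a small $t > 0$ for which
$$\sup_{y \in \partial L^\circ} \bigl( h_K(y) - t\langle v, y \rangle \bigr) < 1.$$

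The estimate is driven by splitting $\partial L^\circ$ into two pieces. On a small relatively open neighborhood $W \subset \partial L^\circ$ of the contact set $\partial K^\circ \cap \partial L^\circ$, chosen by continuity so that $\langle v, y \rangle > \delta/2$ holds throughout $W$, the inclusion $K \subset L$ gives $h_K(y) \leq h_L(y) = 1$, so $h_K(y) - t\langle v, y \rangle \leq 1 - t\delta/2$. On the compact complement $\partial L^\circ \setminus W$ the strict inequality $h_K(y) < 1$ holds pointwise (because $\{y \in \partial L^\circ : h_K(y) = 1\} = \partial K^\circ \cap \partial L^\circ \subset W$), so by continuity and compactness there is a uniform gap $h_K(y) \leq 1 - \epsilon$ for some $\epsilon > 0$. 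Setting $M = \max_{y \in \partial L^\circ} |\langle v, y \rangle|$ yields $h_K(y) - t\langle v, y \rangle \leq 1 - \epsilon + tM$ on this region. For $t$ sufficiently small both bounds are strictly below $1$, and taking $r$ to be their maximum produces the desired containment.

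The main obstacle is the careful coordination of the three small parameters: one must first shrink $W$ to enforce $\langle v, \cdot \rangle > \delta/2$ on it, then extract from compactness a strictly positive gap $\epsilon$ between $h_K$ and $1$ on $\partial L^\circ \setminus W$, and only afterwards choose $t$ small enough that $1 - \epsilon + tM < 1$ holds simultaneously with $1 - t\delta/2 < 1$. Compactness of $\partial L^\circ$, guaranteed by $0 \in \inte L$, together with continuity of the support functions, is exactly what enables each of these steps. Implicitly the statement also assumes $0 \in \inte K$, so that $K^\circ$ is itself a bounded convex body and $h_K$ is a well-behaved norm-like function.
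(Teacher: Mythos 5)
Your proof is correct, and it is essentially the argument behind the result: the paper itself gives no proof but cites Lemma 11 of Jim\'{e}nez and Nasz\'{o}di, whose proof likewise proceeds by separating the origin from $\conv(\partial K^{\circ} \cap \partial L^{\circ})$ and using the resulting functional as a translation direction that, combined with a compactness gap on the rest of $\partial L^{\circ}$, places $K$ inside $rL+tv$ with $r<1$. Your support-function bookkeeping on $\partial L^{\circ}$ (including the observation that $\partial K^{\circ}\cap\partial L^{\circ}=\{y\in\partial L^{\circ}: h_K(y)=1\}$ and the handling of the two regions $W$ and $\partial L^{\circ}\setminus W$) is sound.
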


\begin{proof}
See Lemma $11$ in \cite{jimenez}.
\end{proof}

Note that in the John's position $K \subset L$ there have to be at least $n+1$ contact pairs $(u_i, v_i)_{i=1}^{n+1}$. The situation in which there are exactly $n+1$ of them is quite special.

\begin{lem}
\label{n+1}
Suppose that $K, L \subset \mathbb{R}^n$ are convex bodies such that $K \subset L$ are in a John's position and there are exactly $n+1$ contact pairs $(u_i, v_i)_{i=1}^{n+1}$. Then $a_1=a_2 \ldots = a_{n+1} = \frac{n}{n+1}$ and $\langle u_i, v_j \rangle = -\frac{1}{n}$ for $i \neq j$.
\end{lem}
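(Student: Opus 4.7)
My plan is to exploit the fact that $n+1$ points in $\mathbb{R}^n$, together with the linear reproducing identity and the normalization $\sum a_i u_i = 0$, force the $u_i$'s (and symmetrically the $v_i$'s) to form a simplex, after which the space of linear dependencies becomes one-dimensional and everything is rigid.

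First I would establish the geometric set-up. Applying Lemma \ref{johnconvex} I may assume no $u_i$ lies in $\conv\{u_j : j \neq i\}$, and similarly for the $v_i$'s. The reproducing identity $x = \sum_{i=1}^{n+1} a_i \langle x, u_i\rangle v_i$ forces the $u_i$'s to linearly span $\mathbb{R}^n$; coupled with $\sum_{i=1}^{n+1} a_i u_i = 0$ (and all $a_i > 0$), this implies $0$ lies in the affine hull of the $u_i$'s, so the linear hull equals the affine hull, and the $u_i$'s must be affinely independent, i.e.\ they form a simplex. Taking the transpose of the reproducing identity gives $x = \sum a_i \langle x, v_i\rangle u_i$, so the same argument applies to the $v_i$'s. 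In particular the space of linear relations among $v_1,\ldots,v_{n+1}$ is one-dimensional and is spanned by $(a_1,\ldots,a_{n+1})$.

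Next I would derive two independent linear expressions for $\langle u_i, v_j\rangle$ with $i\neq j$. Fix $j$ and substitute $x = v_j$ in the reproducing identity to obtain
\[
(1 - a_j)\,v_j \;-\; \sum_{i \neq j} a_i \langle v_j, u_i\rangle\,v_i \;=\; 0,
\]
which is a linear dependence among the $v_i$'s. Since the relation space is spanned by $(a_1,\ldots,a_{n+1})$, there exists a scalar $\lambda_j$ with $\langle v_j, u_i\rangle = \lambda_j$ for all $i \neq j$, and simultaneously $\lambda_j = (a_j-1)/a_j$. For a second expression, take the inner product of $0 = \sum_i a_i u_i$ with $v_j$; using $\langle u_j, v_j\rangle = 1$ this gives $a_j + \lambda_j(n - a_j) = 0$, hence $\lambda_j = -a_j/(n-a_j)$.

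Finally I would equate the two formulas for $\lambda_j$. A routine simplification collapses to $(n+1)a_j = n$, i.e.\ $a_j = n/(n+1)$, and back-substitution yields $\lambda_j = -1/n$, so $\langle u_i, v_j\rangle = -1/n$ for every $i \neq j$, as claimed. No real obstacle is expected here; the only delicate point is the affine-independence argument that pins down the dimension of the relation space, which is where Lemma \ref{johnconvex} does the essential work.
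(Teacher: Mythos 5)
Your proof is correct and follows essentially the same route as the paper: both rest on the observation that the space of linear relations among the $n+1$ contact points is one-dimensional and spanned by $(a_1,\dots,a_{n+1})$, and both extract the scalar identities by substituting contact points into the reproducing identity. The only (harmless) variations are in the closing step --- the paper obtains a second expression for $\langle u_i,v_j\rangle$ by running the same substitution on the dual family and deduces that all $a_i$ coincide, whereas you pair $0=\sum_i a_i u_i$ with $v_j$ to solve for each $a_j$ directly --- and in your detour through affine independence, which is unnecessary (linear spanning already gives rank $n$ and hence a one-dimensional kernel, without any appeal to Lemma~\ref{johnconvex}) but does no harm.
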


\begin{proof}
Let $a_1, a_2, \ldots, a_m$ be the corresponding weights, defined as in Section \ref{johnpoz}. The matrix with columns $u_1, u_2, \ldots u_{n+1}$ has the rank $n$ as $\lin \{u_1, u_2, \ldots, u_{n+1}\}= \mathbb{R}^n$. Thus if the equality $\sum_{i=1}^{n+1} c_iu_i=0$ holds, then $c_i = ta_i$ for some $t \in \mathbb{R}$ and every $1 \leq i \leq m$. For any $x \in \mathbb{R}^n$ we have
$$x = \sum_{i=1}^{n+1} a_i \langle x, v_i \rangle u_i.$$
By plugging $x=u_j$ for some $1 \leq j \leq n+1$, we get
$$(a_j - 1)u_j + \sum_{i \neq j} a_i \langle u_j, v_i \rangle u_i = 0,$$
so that $\langle u_j, v_i \rangle = t = \frac{a_j-1}{a_j}.$ for $i \neq j$. By repeating the same argument for $v_i$, we get that $\langle u_j, v_i \rangle = \frac{a_i-1}{a_i}$ for $i \neq j$. This proves that all numbers $a_1, a_2, \ldots, a_{n+1}$ are equal. Since their sum is equal to $n$, we have $a_1 = a_2 = \ldots = a_{n+1} = \frac{n}{n+1}$. It follows that
$$\langle u_i, v_j \rangle = \frac{\frac{n}{n+1}-1}{\frac{n}{n+1}}=-\frac{1}{n}$$ 
and the lemma is proved.

\end{proof}

Before proceeding with the proof of Theorem \ref{twglowne}, we shall describe shortly our approach. A classical result of real analysis states that if $f: \mathbb{R} \to \mathbb{R}$ is a differentiable function and $x_0$ is a local maximum of $f$, then $f'(x_0)=0$. We can think of Theorem \ref{decomposition} as a very involved analogue of this well-known fact, taking place in a much more complicated setting. It gives us a set of necessary conditions that have to hold for a maximum of a mapping $T \to \det(T)$, under the constraint $T(K) \subset L$. If we would have a similar set of conditions not only for a maximum volume position, but also for a position that minimizes the Banach-Mazur or Gr\"unbaum distance, then we can imagine that our state of knowledge in this field would be much greater. It seems that it can be very difficult to describe the minimal distance position of convex bodies in full generality. However, in the particular case of the equality $d_G(K, L)=2$, if we use conditions given in Lemma \ref{conditions}, then in most situations, we can prove by hand that we are not in minimum distance position. In other words, if $K, L$ are different from a triangle, then in most configurations, we can show directly how to find a small affine perturbation $L'$ of $L$ such that $K \subset L' \subset \inte (-2K)$.

\emph{Proof of Theorem \ref{twglowne}}. Let us suppose that $K \subset L$ is in a position of maximal volume. Then, by Theorem \ref{decomposition} for some $z \in \inte K$, convex bodies $K-z \subset L-z$ are in a John's position. Without loss of generality, we may suppose that $z=0$. By Lemma \ref{johnconvex} we can further assume that contact pairs $(u_i, v_i)_{i=1}^{m}$ satisfy
\begin{equation}
\label{convzalozenie}
u_i \not \in \conv \{ u_1, \ldots, u_{i-1}, u_{i+1}, \ldots, u_m \} \text{ and } v_i \not \in \conv \{ v_1,\ldots, v_{i-1}, v_{i+1}, \ldots, v_m \},
\end{equation}
for every $1 \leq i \leq m$. Then from Theorem \ref{glmp} it follows that $L \subset -2K$. Clearly, the set $\partial L \cap \partial (-2K)$ contains some element $x$. Directly from Theorem \ref{decomposition} we have that $3 \leq m \leq 6$. In our case however, if $m$ would be at least $5$, then one of the sets $A, B$, defined as in Lemma \ref{conditions} for $x$, would have to contain at least $3$ elements. If $A$ would contain at least $3$ elements, then by condition (\ref{xv}) some $3$ of the vectors $v_1, v_2, \ldots, v_m$ would lie on one line, contradicting the assumption (\ref{convzalozenie}). Similarly, if $B$ would contain at least $3$ elements, then according to its definition, some $3$ of the vectors $u_1, u_2, \ldots, u_m$ would lie on one line, also contradicting assumption (\ref{convzalozenie}). In this way we have proved that $m=3$ or $m=4$. 

As already mentioned in the introduction, we are going to prove a stronger statement than in the formulation of Theorem. Let $s$ be the cardinality of the set $\partial L \cap \partial (-2K)$ (possibly $s = \infty$). We shall prove that
\begin{enumerate}
\item If $m=3$, then the equality $d_G(K, L)=2$ implies that $K$ is a triangle or $L$ is a triangle, with $L$ not necessarily symmetric.
\item If $m=4$ and $s \neq 3$, then the equality $d_G(K, L)=2$ implies that $K$ is a triangle or $L$ is a triangle, with $L$ not necessarily symmetric (we shall see later that in this case we must have $s=2$ or $s=4$).
\item If $m=4$ and $s=3$ and $L$ is additionaly symmetric, then the equality $d_G(K, L)=2$ implies that $K$ is a triangle.
\end{enumerate}
Thus, the only case in which we have to assume that $L$ is symmetric is the situation of four contact pairs in John's position $K \subset L$ with exactly three contact points of the boundaries of $L$ and $-2K$. 

\textbf{Case (1)}. We can suppose that the contact points $u_1, u_2, u_3$ form an equilateral triangle. By Lemma \ref{n+1} its center of the gravity is in $0$ and the lines $u_2u_3, u_3u_1, u_1u_2$ are parallel to supporting lines at $u_1, u_2, u_3$ respectively. For a simplicity, let us denote $a=-2u_1, b=-2u_2, c=-2u_3$. Then we have a chain of inclusions
$$\conv\{ u_1, u_2, u_3 \} \subset K \subset L \subset \conv \{a, b, c \} \subset -2K \subset \conv \{ 4u_1, 4u_2, 4u_3 \}.$$

In particular every contact point of $\partial L$ and $\partial (-2K)$ belongs to the perimeter of the triangle $abc$. We shall consider different possibilities for the contact points of $\partial L$ and $\partial (-2K)$. 

\textbf{Case (1a)}: all contact points of $\partial L$ and $\partial (-2K)$ lie in the interiors of the sides of the triangle $abc$. If $x$ is such a contact point, lying in the interior of the side $ab$ then, clearly the whole segment $[a, b]$ is contained in $\partial (-2K)$. Moreover, the only supporting line to $-2K$ at $x$ is the line $ab$. Therefore, by Lemma \ref{convex} each side has to contain at least one contact point. But then $-2K$ contains the segments $[ab], [bc], [ca]$ in the boundary and hence $-2K=\conv \{a, b, c \}$. In this case the conclusion follows.

\textbf{Case (1b)}: there is exactly one contact point of $\partial L$ and $\partial (-2K)$ that belongs to the set $\{a, b, c\}$. Suppose that it is $c$. Note that there is some point of contact $x \in \partial L \cap \partial (-2K)$ that belongs to the side $ab$. Otherwise, the convex hull of $\partial L^{\circ} \cap \partial (-2K)^{\circ}$ would not contain $0$, which would contradict Lemma \ref{convex}. Suppose that $x$ belongs to the interior of the side $ab$. Then $[a, b] \subset \partial (-2K)$ and also $[u_1, u_2] \subset \partial K$. In particular, $u_3 \in \partial L^{\circ} \cap \partial (-2K)^{\circ}$, but possibly there are more points of contact along the side $[a, b]$. The triangle $\conv \{u_1, u_2, c \}$ is contained in $L$. Furthermore, if both segments $[u_1, u_3], [u_2, u_3]$ are contained in the boundary of $K$, then clearly $K = \conv \{u_1, u_2, u_3\}$. Let us first consider the case in which both of these segments are \textbf{not} contained in the boundary of $K$. Figure \ref{gj3k1} demonstrates a position of convex bodies $K \subset L \subset -2K$ in this situation.

\begin{figure}[h]
\centering
\includegraphics[width=\textwidth]{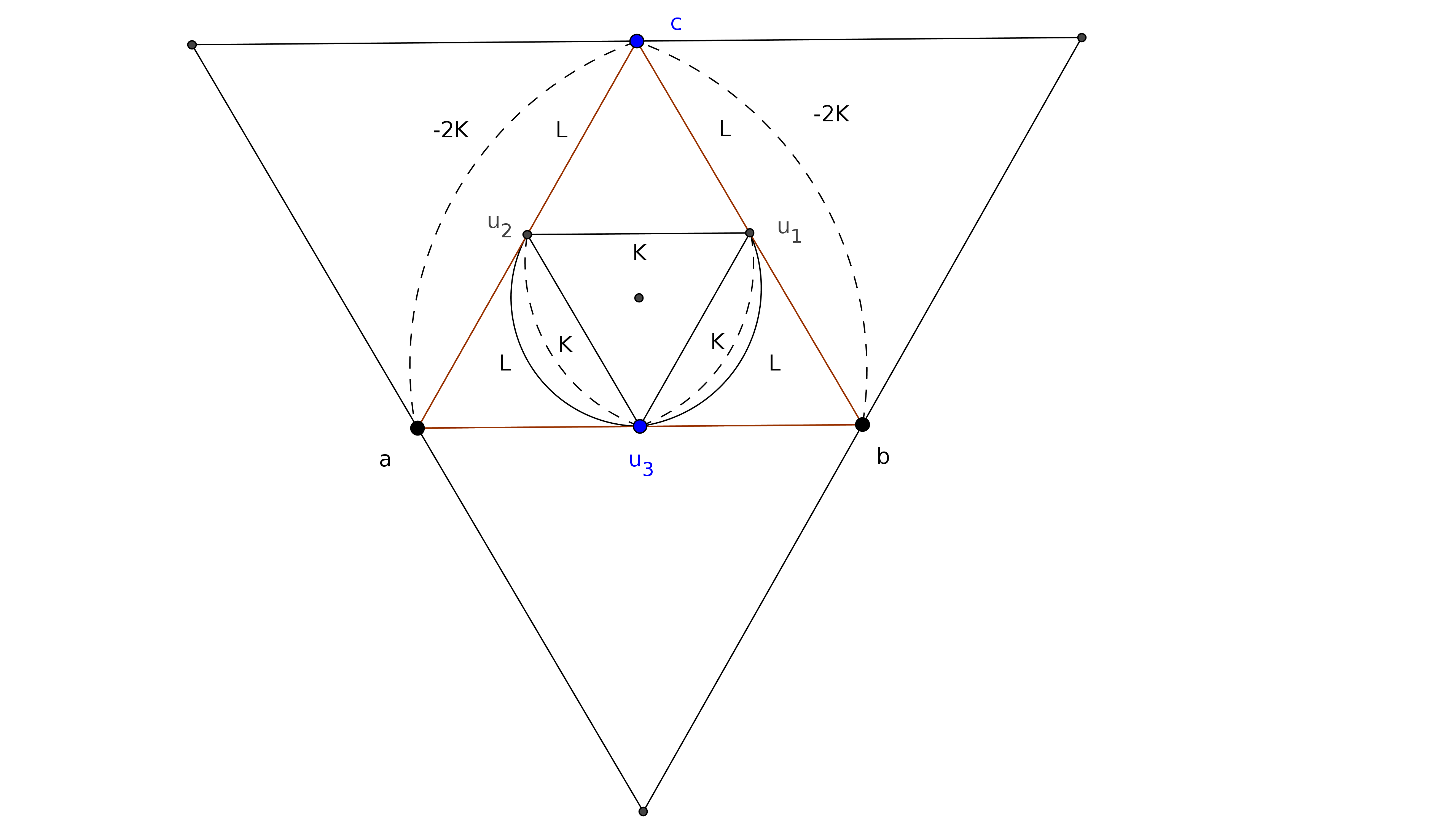}
\caption{Position of convex bodies $K \subset L \subset -2K$, with $c, u_3$ as contact points of the boundaries of $L$ and $-2K$. Part of the boundaries of $K$ and $-2K$, that are not necessarily a subset of the boundary of $L$, are marked with a dotted line.}
\label{gj3k1}
\end{figure}

Our aim is to apply an affine transformation $T$ on $L$ such that the resulting affine image $L'=T(L)$ will still satisfy $K \subset L' \subset -2K$ but $ 0 \not \in \conv (\partial (L')^{\circ} \cap \partial (-2K)^{\circ})$. According to Lemma \ref{convex}, this will give us that $d_G(L, K) < 2$, which yields the desired contradiction. An affine image $L'$ will be a small perturbation of $L$ and is defined as follows. We consider $u_3$ as the center of the coordinate system and we perform a linear transformation $T$ on $L$ defined as 
$$T(x, y) = (f(\varepsilon)x, (1-\varepsilon)y),$$
where $\varepsilon>0$ and the factor $f(\varepsilon)>1$ is such that the points $u_1, u_2$ are mapped \textbf{outside} the triangle $abc$. Thus, $T$ shrinks $L$ in the vertical direction and elongates it in the horizontal direction (see Figure \ref{gj3k1trans}). Note that there is some positive distance between $u_2$ and $\partial (-2K)$ and between $u_3$ and $\partial (-2K)$. There is also some positive distance between $\partial L$ and $a$ and between $\partial L$ and $b$ as we have assumed that $a, b$ do not belong to $L$. Therefore, by choosing $\varepsilon$ small enough, we can guarantee that the image $L'$ of $L$ will still be contained in $-2K$. Moreover, the part of $L$ that is below the horizontal line $u_1u_2$ will be covered by $L'$. As $K$ does not have any points above this line we also have $K \subset L'$. In this way, we have reduced the contact point $c$ and therefore the origin does not belong the convex hull of the set $\partial (L')^{\circ} \cap \partial (-2K)^{\circ}$, contained now in the interior of the segment $[a, b]$. This gives a desired contradiction and conclusion follows.

\begin{figure}[h]
\centering
\includegraphics[width=\textwidth]{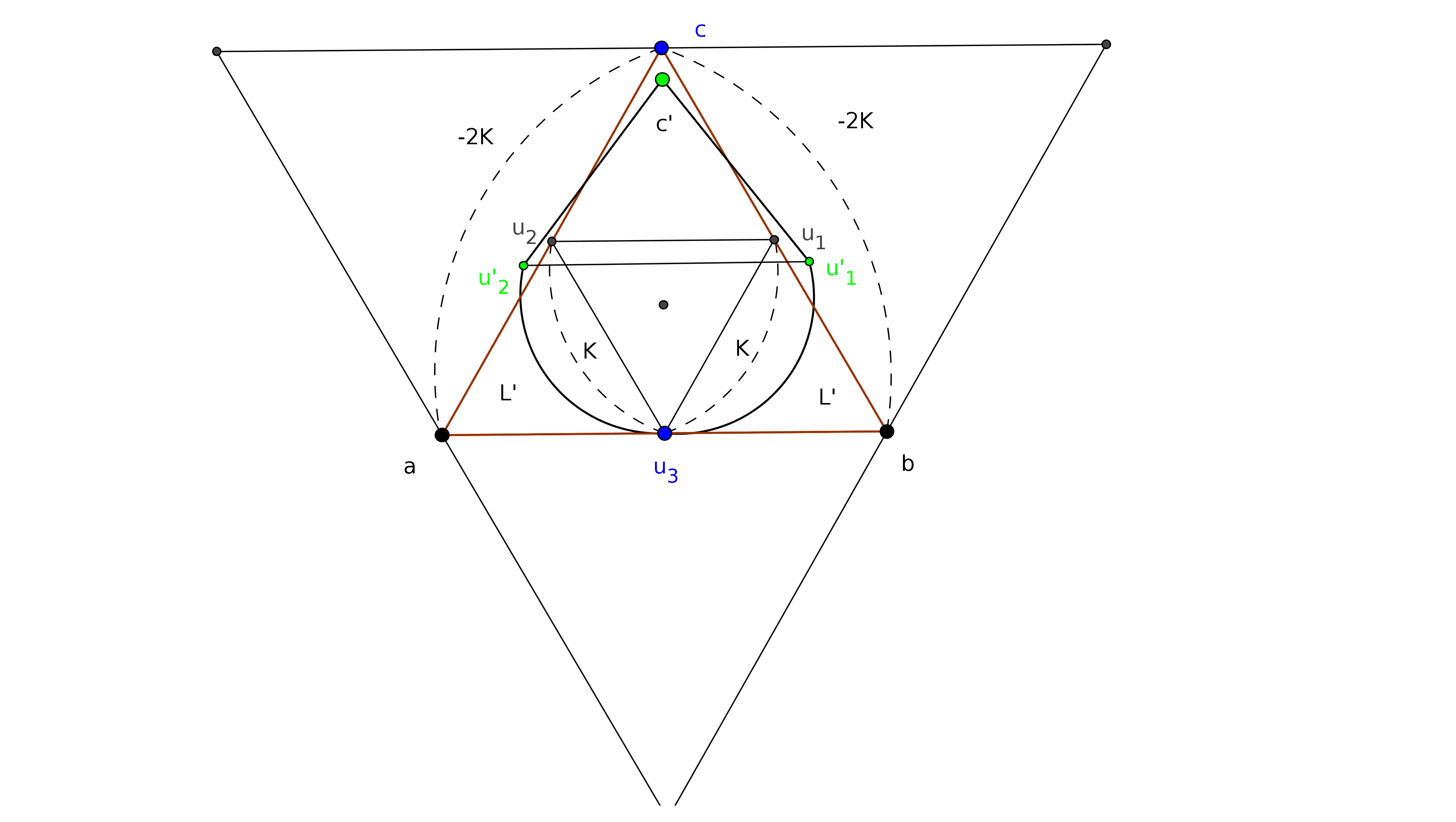}
\caption{An affine transformation $L'$ of $L$ such that $K \subset L' \subset -2K$ and all contact points of the boundaries of $L'$ and $-2K$ are in the interior of the segment $[a, b]$.}
\label{gj3k1trans}
\end{figure}

Now, we consider the case in which one of the segments $[u_2, u_3]$, $[u_1, u_3]$ is contained in the boundary of $K$. Without loss of generality, we can suppose that $[u_2, u_3] \subset \partial K$. Thus, the boundary of $K$ is a union of the line segments $[u_1, u_2]$, $[u_2, u_3]$ and some convex curve between $u_1$ and $u_3$. Similarly, the boundary of $-2K$ is a union of the line segments $[a, b]$, $[b, c]$ and some convex curve between $a$ and $c$. The contact points of the boundaries of $L$ and $-2K$ are still all contained in the interior of the side $[a, b]$, with the exception of the point $c$. See Figure \ref{gj3k1odcinek}.

\begin{figure}[h]
\centering
\includegraphics[width=\textwidth]{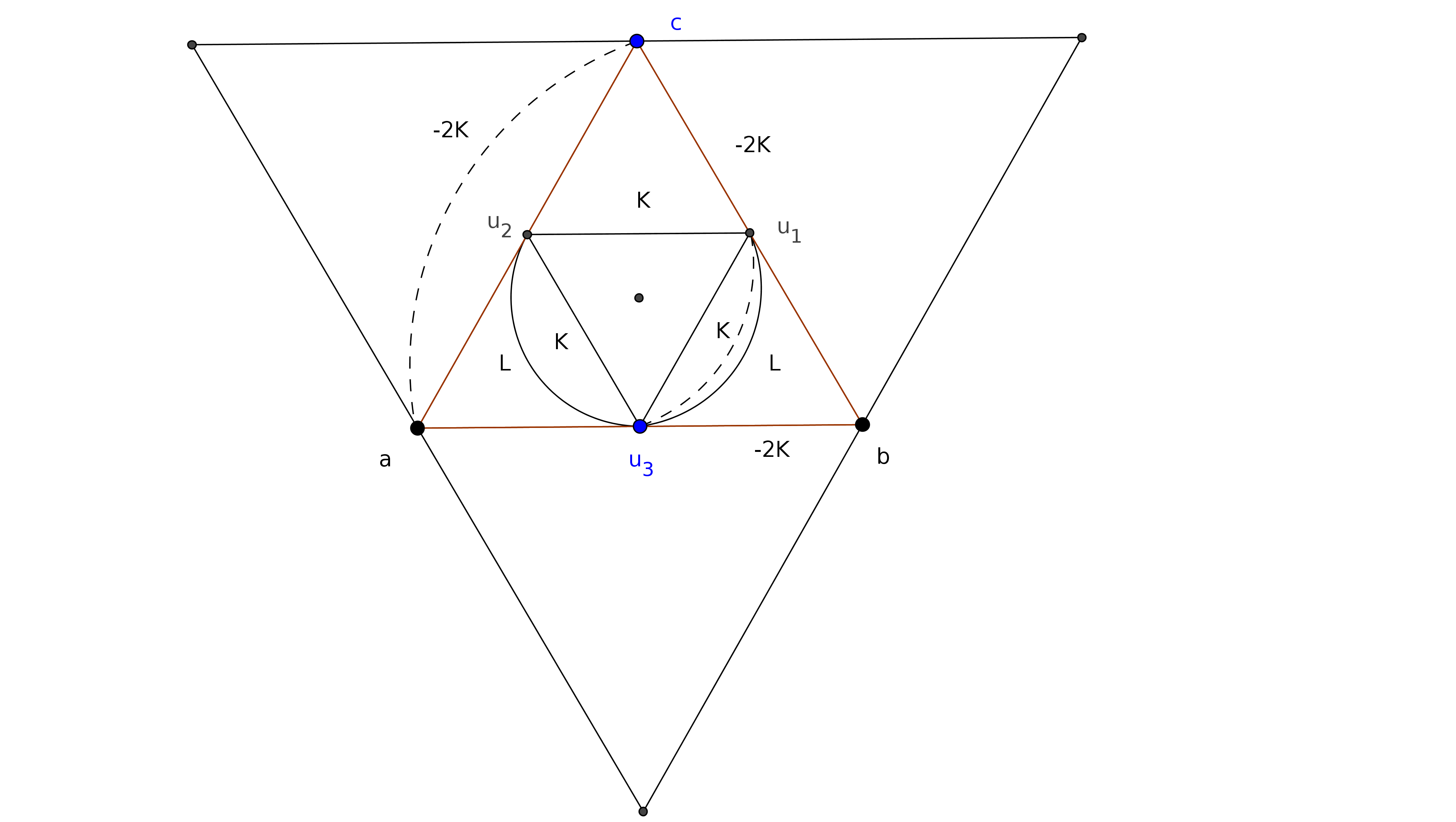}
\caption{Position of convex bodies $K \subset L \subset -2K$. Now the segment $[u_2, u_3]$ is in contained in the boundary of $K$ and the segment $[b, c]$ is contained in the boundary of $-2K$.}
\label{gj3k1odcinek}
\end{figure}

We want to use similar argument as before, but we are constrained by the fact that now there is no space between $[b, c]$ and the boundary of $-2K$. We begin therefore by shifting both $K$ and $L$ to the left along the horizontal direction (while keeping $-2K$ stationary) using the map: $(x, y) \mapsto (x-\varepsilon, y)$. We obtain convex bodies $K' \subset L'$ (see Figure \ref{gj3k1odcinektrans}).

\begin{figure}[h]
\centering
\includegraphics[height=9 cm]{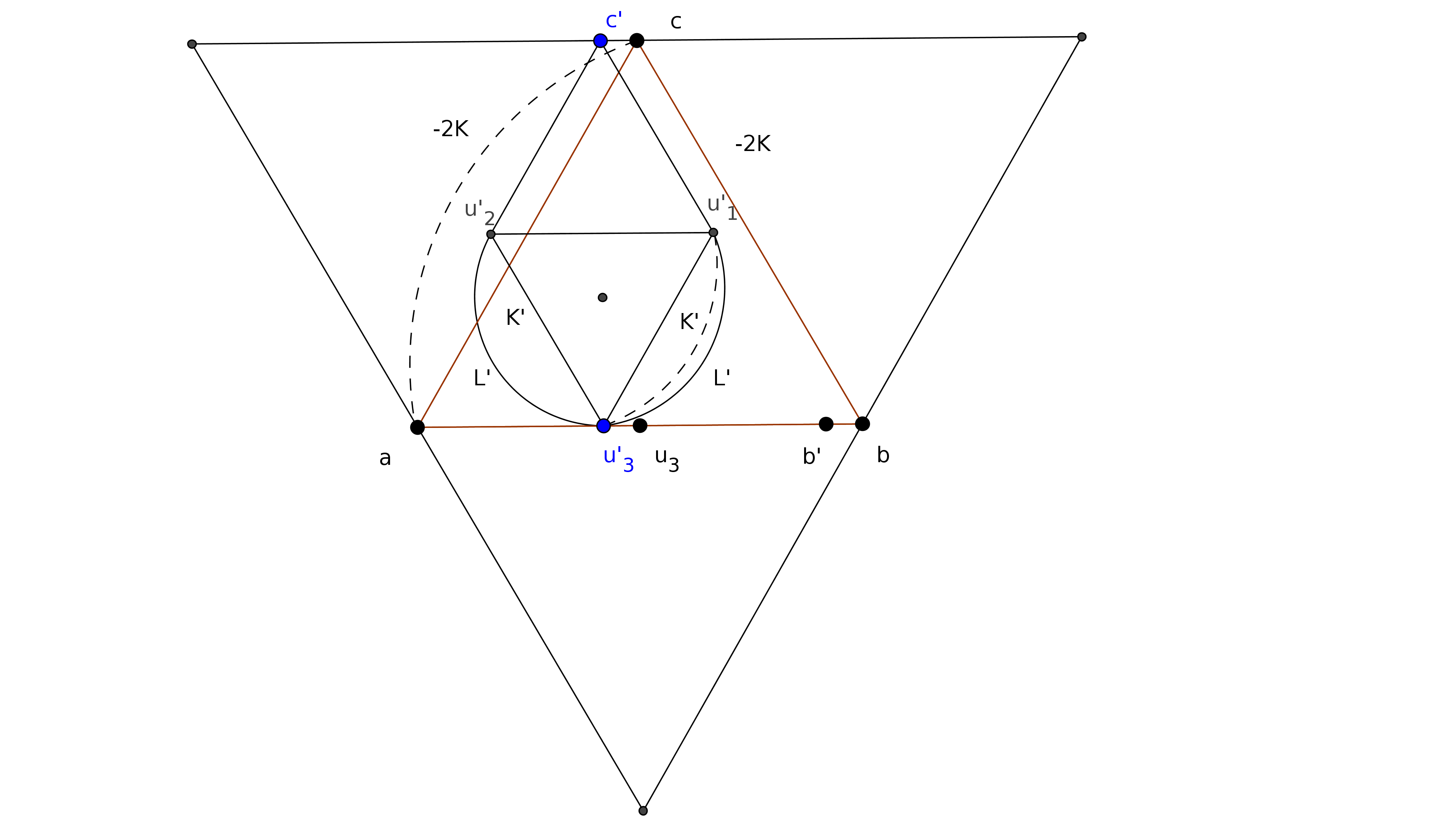}
\caption{Convex bodies $K' \subset L'$ are shifts of $K$ and $L$ by $\varepsilon$ to the left.}
\label{gj3k1odcinektrans}
\end{figure}

Note that $K' \subset -2K$ for a sufficiently small $\varepsilon$. Moreover, the part of $L'$ that is below $u_1'u_2'$ will also be contained in $-2K$ for a small $\varepsilon$, as there is some positive distance from $\partial (-2K)$ to the part of $L'$ between $u_2'$ and $u_3'$. Triangle $u_1'u_2'c'$ may not be contained in $-2K$, as $c'$ can be outside. We want to find an affine image $L''=T(L')$ of $L'$ such that $K' \subset L'' \subset -2K$. We consider $u_3'$ as a center of the coordinate system and we suppose that the segment $[u_1, u_2]$ is of the unit length. We perform a linear transformation $T$ defined as
$$T(x, y) = \left(\frac{1-\varepsilon}{1-2\varepsilon}x, (1-\varepsilon) y \right).$$
\begin{figure}[h]
\centering
\includegraphics[width=\textwidth]{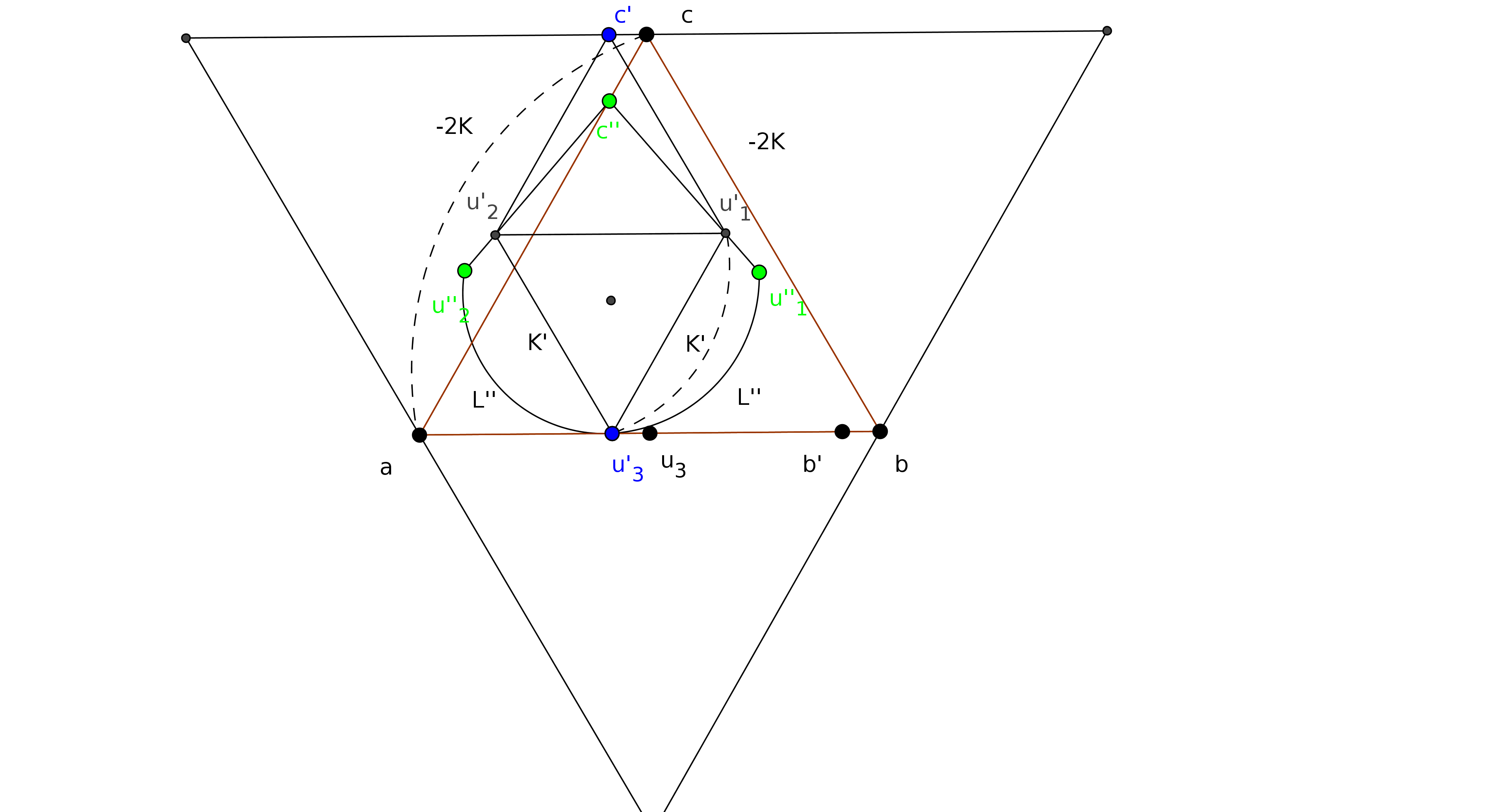}
\caption{An affine transformation $L''$ of $L'$ such that $K' \subset L'' \subset -2K$ and all contact points of the boundaries of $L''$ and $-2K$ are in the interior of the segment $[a, b]$.}
\label{gj3k1odcinektrans2}
\end{figure}
By a straightforward calculation, one can easily check that $T(c')$ is an interior point of the segment $[a, c]$ and that the triples of points $(T(c'), u_1', T(u_1'))$, $(T(c'), u_2', T(u_2'))$ are collinear (see Figure \ref{gj3k1odcinektrans2}). For sufficiently small $\varepsilon>0$, the convex body $L''$ will not meet the boundary of $-2K$ on the left. We started however, by shifting $L$ to the left by $\varepsilon$ and then elongating in the horizontal direction. Therefore, we need to check carefully that $L''$ and the boundary of $-2K$ will not meet to the right for an appropriate $\varepsilon$. We shall do it by simple means of the analytic geometry. We have
$$u_3'=(0, 0),\ u_3 = (\varepsilon, 0),\ b=(1+\varepsilon, 0),\ b' = (1, 0),\ c=(\varepsilon, \sqrt{3}),$$
$$c'=(0, \sqrt{3}),\ u_1'= \left ( \frac{1}{2}, \frac{\sqrt{3}}{2} \right ),\ u_1=\left (\frac{1}{2} + \varepsilon, \frac{\sqrt{3}}{2} \right).$$
We need to check that $T(L' \cap \conv\{ u_3', b', u_1'\}) \cap \partial (-2K) \subset [u_3', b]$. Since $(1, 0) \not \in L'$, we have that $\sup_{(x, y) \in L} x = r < 1$. Let us fix $(x_0, y_0) \in L' \cap \conv\{ u_3', b', u_1'\}$. Then $r \geq x_0>0$, $y_0>0$ and 
\begin{equation}
\label{linia}
\sqrt{3}x_0 + y_0 < \sqrt{3}.
\end{equation} 
Moreover, $T(x_0, y_0) = \left(\frac{1-\varepsilon}{1-2\varepsilon}x_0, (1-\varepsilon) y_0 \right)$. What we need to check is that $T(x_0, y_0)$ lies to the left of the line passing through $b$ and $c$. If we take $\varepsilon < \frac{1-r}{2}$ then
$$\sqrt{3}\frac{\varepsilon}{1-2\varepsilon} r < \sqrt{3} \varepsilon,$$
which, combined with the inequality $-\varepsilon y_0 \leq 0$, yields
$$\sqrt{3}\frac{\varepsilon}{1-2\varepsilon} x_0 - \varepsilon y_0 \leq \sqrt{3} \varepsilon.$$
Adding this with the inequality (\ref{linia}) we obtain
$$\frac{1-\varepsilon}{1-2\varepsilon} \sqrt{3}x_0 + (1-\varepsilon)y_0 \leq \sqrt{3}(1+\varepsilon),$$
which shows that $T(x_0, y_0)$ lies on the left side of the line passing through $b$ and $c$. Thus, we have $K' \subset L'' \subset -2K$ and all contact points $\partial L'' \cap \partial (-2K)$ are contained in the interior of the segment $[a, b]$. Again, this is a contradiction with Lemma \ref{convex}.

\textbf{Case (1c)}: there are more contact points of $\partial L$ and $\partial (-2K)$ in the set $\{a, b, c\}$. If all of $a, b, c$ are contact points, then clearly $L = \conv \{a, b, c\}$. We are therefore left with the case where exactly two of $a, b, c$ are contact points. Let them be $b$ and $c$. Then $L$ contains the trapezoid with vertices $b, c, u_2, u_3$ and has possibly some additional part contained in the triangle $u_2au_3$. In other words, the boundary of $L$ is a union of segments $[u_3, b], [b, c], [c, u_2]$ and some convex curve connecting $u_2$ with $u_3$. As $b, u_1, c \in \partial (-2K)$ the whole segment $[b, c]$ is contained in the boundary of $-2K$. Moreover, there are possibly some parts of $-2K$ contained in the triangles $ayc$ and $abz$. See Figure \ref{gj3k2} for an illustration of this configuration.

\begin{figure}[h]
\centering
\includegraphics[width=\textwidth]{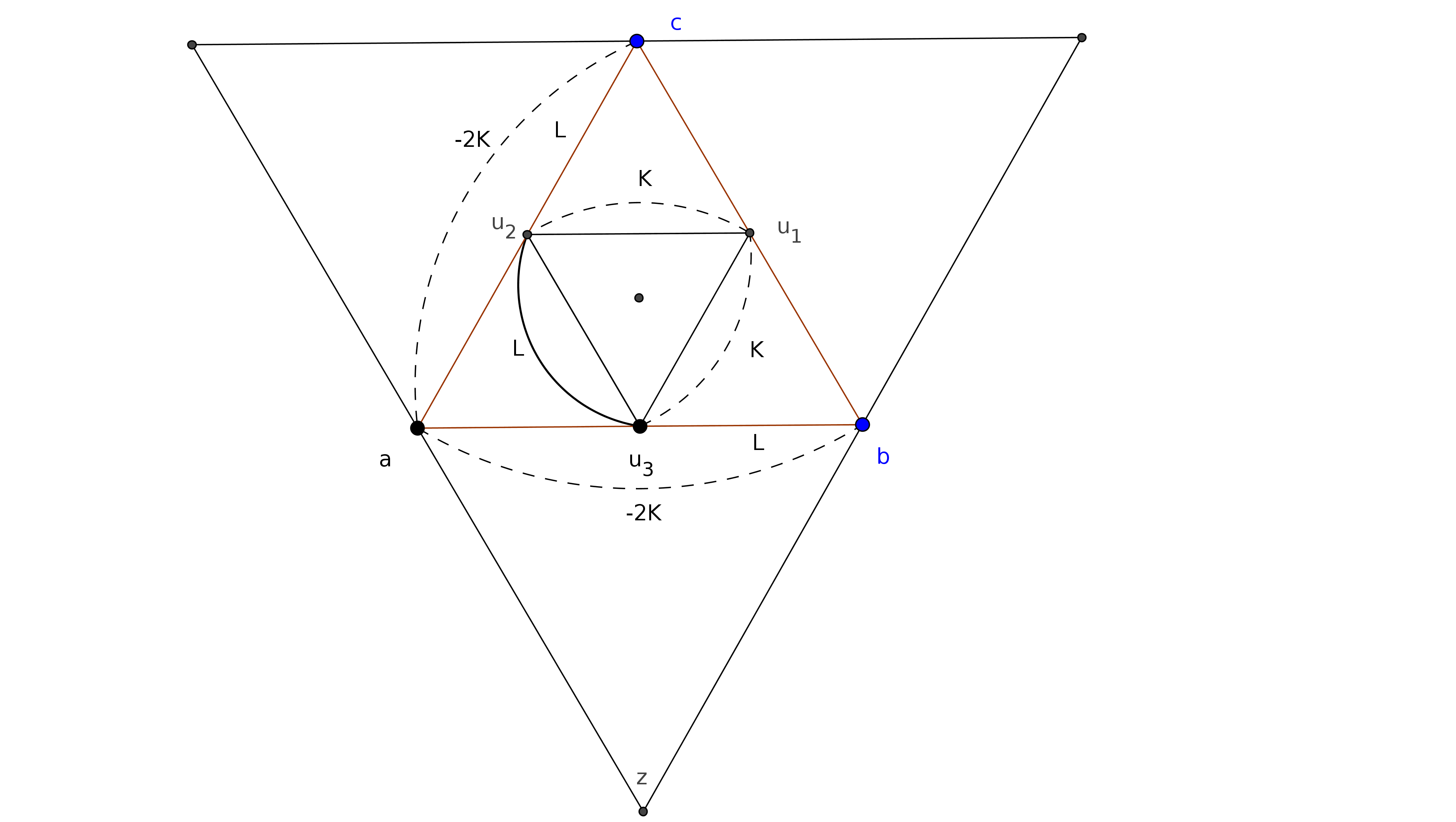}
\caption{Position of convex bodies $K \subset L \subset -2K$, with $b, c$ as contact points of the boundaries of $L$ and $-2K$. Part of the boundaries of $K$ and $-2K$, that are not necessarily a subset of the boundary of $L$, are marked with a dotted line. Part of the boundary of $L$ between $u_2$ and $u_3$ is some convex curve.}
\label{gj3k2}
\end{figure}

As $a \not \in \partial L^{\circ} \cap \partial (-2K)^{\circ}$ and yet $0 \in \conv \left ( \partial L^{\circ} \cap \partial (-2K)^{\circ} \right )$ by Lemma \ref{convex}, it is clear that there are two parallel lines that are supporting $-2K$ and $L$ in $b$ and $c$. Let us call this supporting line in $b$ by $\ell$ and let us also denote $z=4u_3$. Line $\ell$ is contained in the angle $\angle abz$. If it coincides with the line $ab$ then the segment $[a, b]$ is contained in the boundary of $-2K$ and similarly, if $\ell$ coincides with the line $bz$ then the segment $[a, c]$ is contained in the boundary of $-2K$. If both of these segments are in the boundary of $-2K$ then obviously $-2K = \conv \{a, b, c \}$. Roles played by $b$ and $c$ are now symmetric and therefore, without losing the generality, we can assume that $u_2 \not \in \partial (-2K)$. In particular, the line $\ell$ does not coincide with the line $bz$ and there is some positive distance between $u_2$ and $\partial(-2K)$. There might be no positive distance between $u_3$ and $\partial(-2K)$, if $\ell$ coincides with the line $ab$.

Again, we will find and affine image $L'$ of $L$ such that $K \subset L' \subset -2K$ and contact points are reduced. Note that an affine image of the trapezoid $bcu_2u_3$ is an arbitrary trapezoid with the ratio of the bases equal to $2$. Consider an affine mapping $T: \mathbb{R}^2 \to \mathbb{R}^2$ such that:
\begin{itemize}
\item $T(b)=b$, 
\item $T(c)=c'$, where $c' \in [b, c]$ and the segment $[c, c']$ is of length $\varepsilon>0$,
\item $T(u_2)=u_2'$ where $u_2'$ is outside of the triangle $abc$ and the points $u_2', u_2, c'$ are collinear,
\item $T(u_3)=u_3'$, where $u_3'$ lies in the interior of the segment $[a, c]$.
\end{itemize}

\begin{figure}[h]
\centering
\includegraphics[width=\textwidth]{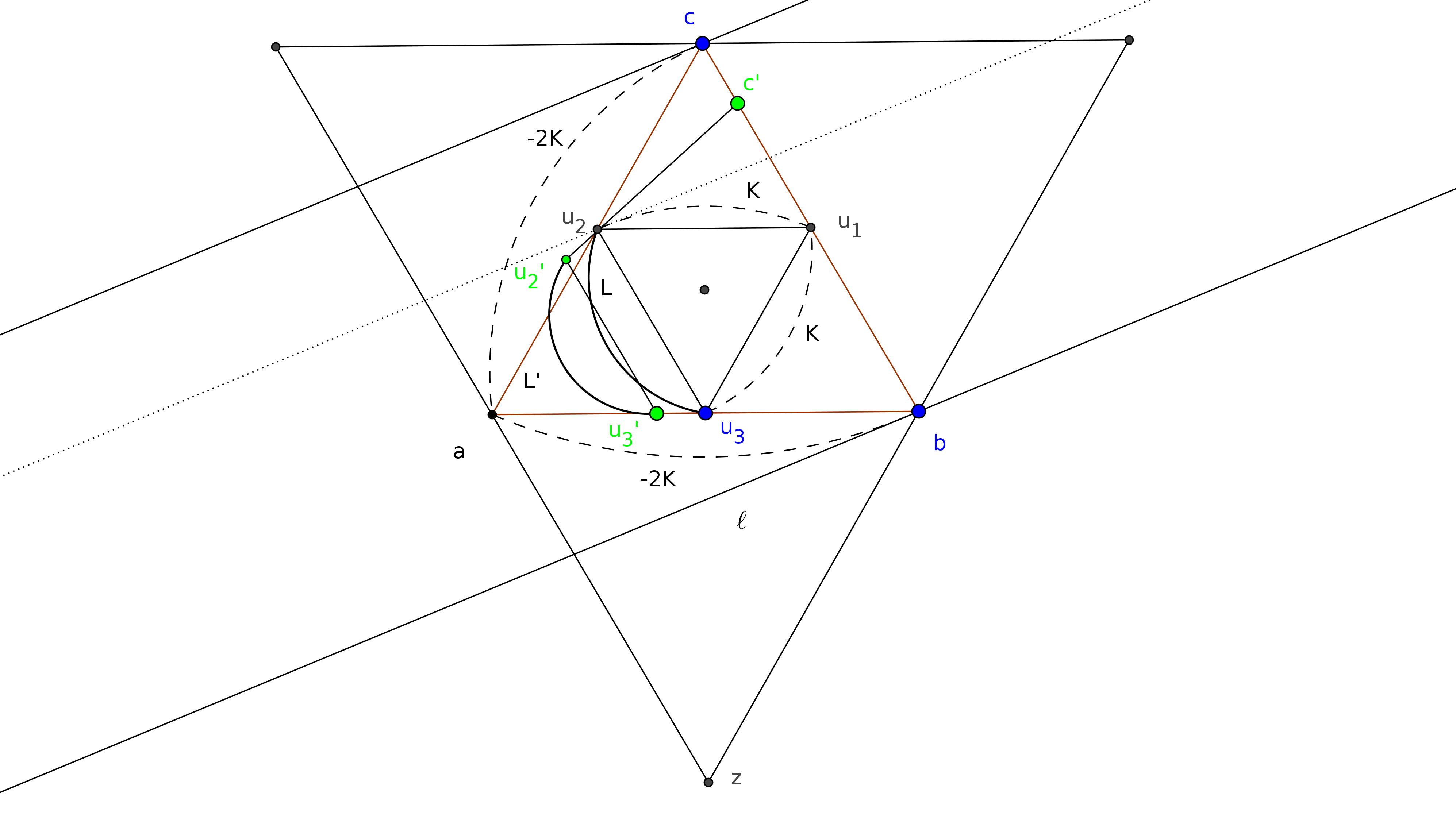}
\caption{An affine transformation $L'$ of $L$ such that $K \subset L' \subset -2K$ and all contact points of the boundaries of $L'$ and $-2K$ are in the half-open segment $[b, a)$.}
\label{gj3k2trans}
\end{figure}

The affine map $T$ shrinks the trapezoid in the direction determined by the bases and elongates it in the perpendicular direction (see Figure \ref{gj3k2trans}). For a sufficiently small $\varepsilon>0$, the part of $L'$ between $u_2'$ and $u_3'$ is still contained in $-2K$. Thus, $L' \subset -2K$ and we are left with proving the other inclusion. The triangles $u_2u_1u_3$ and $u_1u_3b$ are still contained in $L'$. Moreover, for $\varepsilon$ approaching $0$, the line $u'_2u_2$ approaches the line $ac$. Hence, for some small $\varepsilon$, this line is contained in the angle determined by the ray $u_2c$ and the ray parallel to $\ell$, going out from $u_2$. It means that the line $u'_2u_2$ is supporting to $K$ in $u_2$, which proves that $K \subset L'$. All contact points of the boundaries of $L'$ and $-2K$ are now in the half-open segment $[b, a)$. According to Lemma \ref{convex}, this gives us that $d_G(K, L) < 2$, which contradicts our assumption. In this way, we have handled all different possibilities in the situation of three contact points in the John's position of $K \subset L$ and the case $(1)$ is proved.

Assume now that $d_G(K, L)=2$ and there are $4$ contact pairs $(u_i, v_i)_{1 \leq i \leq 4}$ in the John's position $K \subset L$. Let us assume that $u_1, u_2, u_3, u_4$ are vertices of a convex quadrilateral in this order. We start with some general observations. According to Lemma \ref{convex}, we have that $0 \in \conv \left ( \partial L^{\circ} \cap \partial (-2K)^{\circ} \right ).$ Therefore, there are at least two contact points of the boundaries $L$ and $-2K$. Let $x \in \partial L \cap \partial (-2K)$. Let $w \in K^{\circ}$ be such that $\langle x, w \rangle = -2$. From condition (\ref{convzalozenie}) it follows that both sets $A_x$ and $B_x$, defined as in Lemma \ref{conditions}, are $2$-element sets. Suppose that $A_x=\{u_i, u_j\}$ and $B_x=\{u_k, u_l\}$. According to Lemma \ref{conditions}, we have
$$\frac{-x}{2} \in \conv\{u_k, u_l\}, \: \langle w, u_k \rangle = \langle w, u_l \rangle = 1, \: \langle x, v_i \rangle = \langle x, v_j \rangle = 1.$$

In particular, $x$ is the intersection of the lines determined by $v_i$ and $v_j$. Therefore, if $x' \in \partial L \cap \partial (-2K)$ and $x' \neq x$, then $A_{x'} \neq A_{x}$ and $B_{x'} \neq B_{x}$. Moreover, it is clear that $w$ is parallel to the line determined by $u_k, u_l$. Segments $[x, u_i], [x, u_j]$ are contained in the boundary of $L$. We claim that also the segment $[u_k, u_l]$ is contained in the boundary of $K$. Indeed, if $-\frac{x}{2}$ is an interior point of the segment $[u_k, u_l]$ then our claim is obvious. If $x = -2u_k$ or $x=-2u_l$, then by the previous observations, the line determined by $u_k$, $u_l$ is supporting to $K$ at $u_k$ or $u_l$ and we arrive at the same conclusion.

The lines determined by $v_1, v_2, v_3, v_4$ have at most six intersection points, but no five of them can lie in a convex position. Therefore, there are at most four contact points of the boundaries of $L$ and $-2K$. Let us start with the case, in which there are exactly four of them.

\textbf{Case (2a)}: there are exactly four points of contact of $\partial L$ and $\partial (-2K)$. Supppose that $\partial L^{\circ} \cap \partial (-2K)^{\circ} = \{x_1, x_2, x_3, x_4 \}$. Note that for each $1 \leq i \leq 4$ vector $u_i$ belongs to at most $2$ different sets $A_{x_j}$. Indeed, let us assume that for example $u_1 \in A_{x_1}, A_{x_2}, A_{x_3}$. Then
$$1 = \langle x_1, v_1 \rangle = \langle x_2, v_1 \rangle = \langle x_3, v_1 \rangle,$$
so that the points $x_1, x_2, x_3$ are collinear. Thus, the whole segment $[x_1, x_2]$ is contained in the boundaries of $L$ and $-2K$, which contradicts the fact that there are only $4$ contact points. See Figure \ref{gj4k4}.

\begin{figure}[h]
\centering
\includegraphics[width=\textwidth]{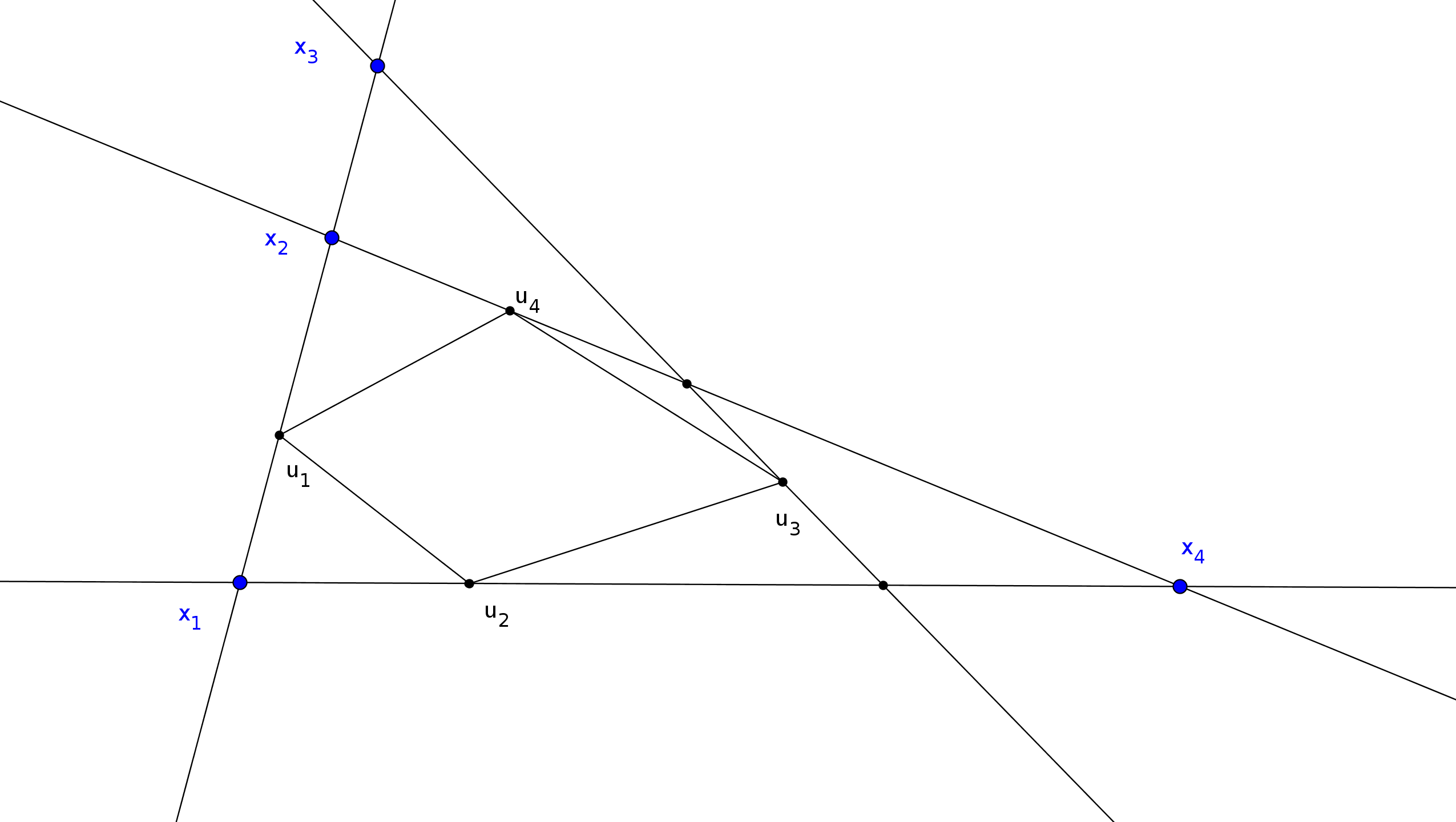}
\caption{Position of convex bodies $K \subset L \subset -2K$, with $x_1, x_2, x_3, x_4$ as contact points of the boundaries of $L$ and $-2K$. Assumption that $x_1, x_2, x_3$ lie on one line easily leads to a contradiction.}
\label{gj4k4}
\end{figure}

Therefore, for each $1 \leq i \leq 4$ we have that $u_i$ belongs to exactly two of the sets $A_{x_1}, A_{x_2}, A_{x_3}, A_{x_4}$. But, if for example $u_1 \in A_{x_1}, A_{x_2}$, then by the previous observation the segments $[x_1, u_1]$ and $[u_1, x_2]$ are contained in the boundary of $L$ and thus $[x_1, x_2] \subset \partial L$. Hence, it is clear that $L = \conv \{x_1, x_2, x_3, x_4\}$. Moreover, if $u_1, u_2 \in A_{x_1}$, then $[u_3, u_4] \subset \partial K$ and for the same reason $K = \conv \{u_1, u_2, u_3, u_4\}$. We conclude that both $K$ and $L$ are quadrilaterals. We can use now a result of Lassak (see \cite{lassak}): the Banach-Mazur distance between two convex quadrangles is at most $2$ and the equality holds if and only if one of them is the parallelogram and the other one is the triangle. So in this case, the conclusion follows.

\textbf{Case (2b)}: there are exactly two points of contact of $\partial L$ and $\partial (-2K)$. Suppose that $A_x=\{u_3, u_4 \}$ and $B_x = \{u_1, u_2 \}$. From Lemma \ref{convex} it follows that there are two parallel supporting lines of $L$ and $-2K$ passing through $x$ and $y$. From previous observations we see that they are parallel to the line $u_1u_2$. It follows that the line determined by two points in $B_y$ is parallel to $u_1u_2$. Since $B_y \neq B_x$, we conclude that $B_y=\{u_3, u_4\}$. Quadrilateral with vertices $u_1, u_2, u_3, u_4$ is therefore a trapezoid. Moreover, $[u_3, x], [u_4, x], [u_1, y], [u_2, y] \subset \partial L$ and $[u_1, u_2], [u_3, u_4] \subset \partial K$. Consider the parts of $\partial L$ between $u_1$ and $u_4$ and between $u_2$ and $u_3$. They are at some positive distance to $\partial(-2K)$ as $x, y$ are the only contact points of $\partial L$ and $\partial(-2K)$. See Figure \ref{gj4k2}.

\begin{figure}[h]
\centering
\includegraphics[width=\textwidth]{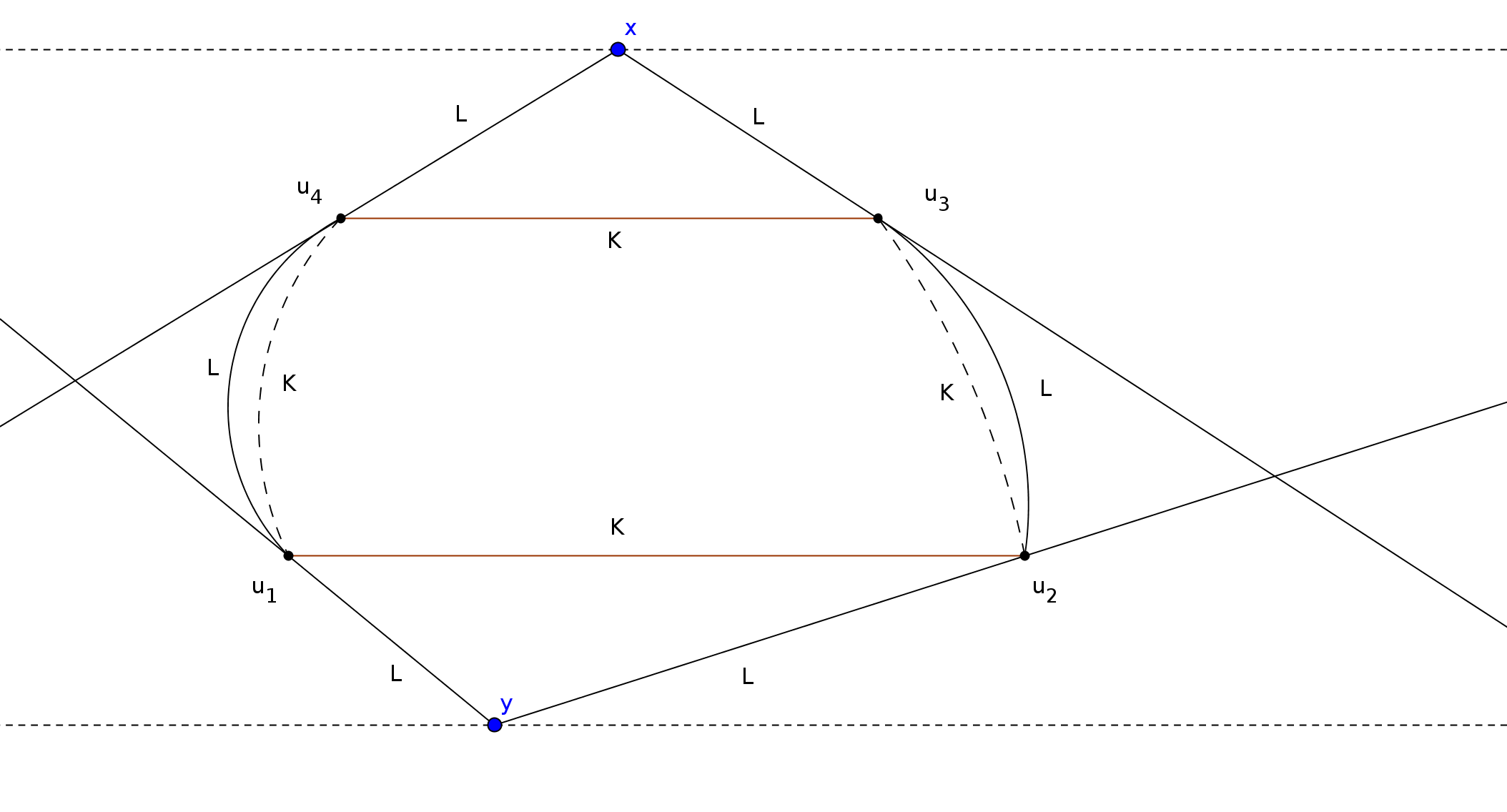}
\caption{Position of convex bodies $K \subset L \subset -2K$, with $x$ and $y$ as contact points of the boundaries of $L$ and $-2K$. In this case, the quadrilateral $u_1u_2u_3u_4$ is a trapezoid. The segments $[u_3, x], [u_4, x], [u_1, y], [u_2, y]$ are contained in the boundary of $L$ and the segments $[u_1, u_2], [u_3, u_4]$ are contained in the boundary of $K$.}
\label{gj4k2}
\end{figure}

Our aim is to find an affine copy $T(L)=L'$ of $L$ such that $K \subset L' \subset -2K$ and the number of contact points of $\partial L' \cap \partial (-2K)$ is reduced to $0$. For this purpose we consider any affine mapping  $T: \mathbb{R}^2 \to \mathbb{R}^2$ satisfying the following conditions:
\begin{itemize}
\item lines $T(u_1)T(u_2), T(u_3)T(u_4)$ are parallel to lines $u_1u_2$, $u_3u_4$ and are lying between them,
\item $T(x) \in \conv\{x, u_3, u_4\}$ and $T(y) \in \conv \{y, u_1, u_2\}$,
\item segments $[T(x), T(u_3)], [T(x), T(u_4)], [T(y), T(u_1)], [T(y), T(u_2)]$ are intersecting interiors of the segments $[x, u_3], [x, u_4], [y, u_1], [y, u_2]$ respectively,
\item segment $[x, x']$ is of the length $\varepsilon>0.$
\end{itemize}

\begin{figure}[h]
\centering
\includegraphics[width=\textwidth]{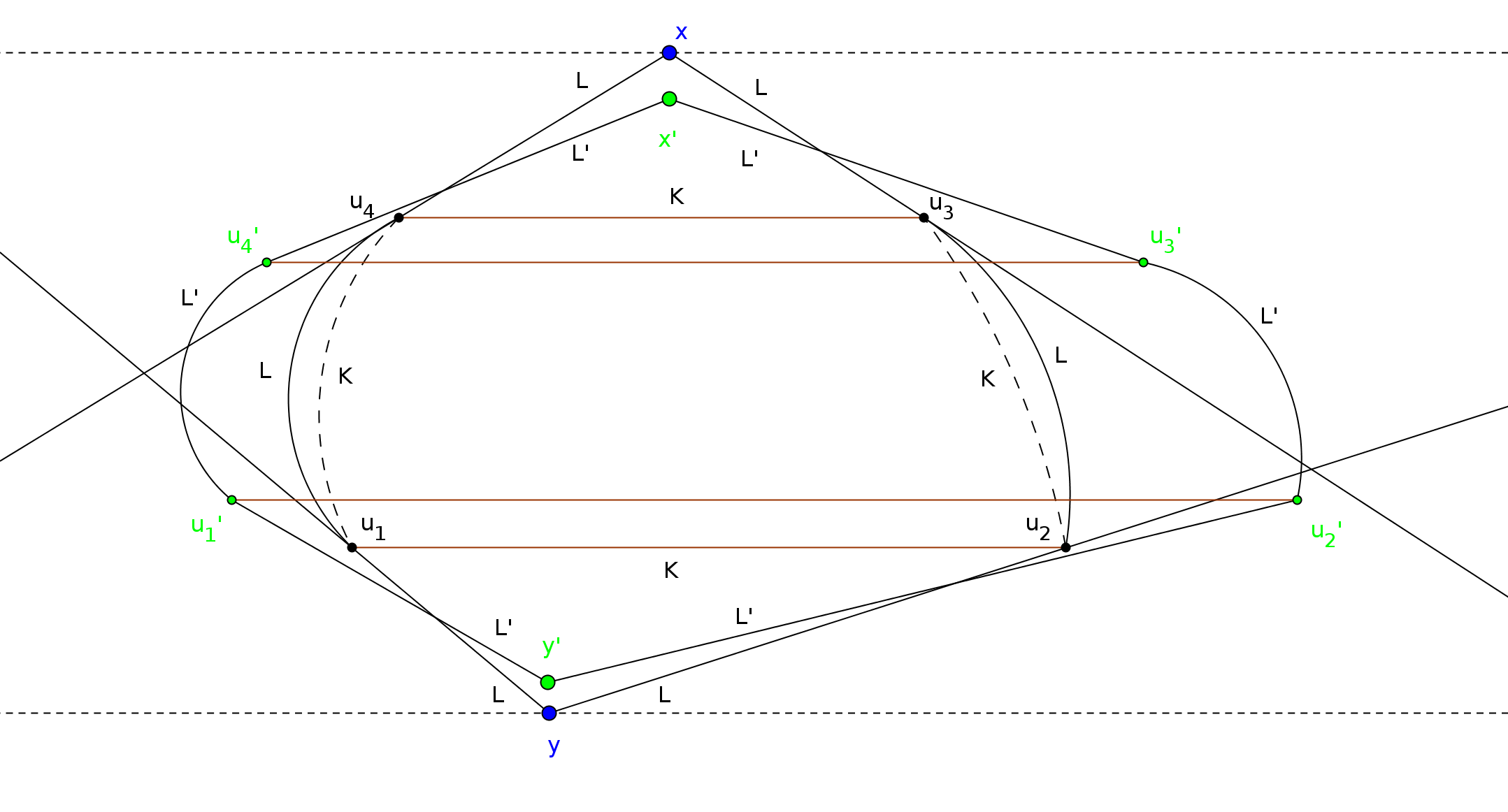}
\caption{An affine transformation $L'$ of $L$ such that $K \subset L' \subset -2K$ and there are no contact points of the boundaries of $L'$ and $-2K$.}
\label{gj4k2trans}
\end{figure}

See Figure \ref{gj4k2trans}. Similar reasoning as we have used before, shows that for a sufficiently small $\varepsilon>0$ such an affine mapping $T$ exists and also $K \subset L' \subset -2K$. Moreover, $\partial L' \cap \partial (-2K) = \emptyset.$ This yields a desired contradiction in this case. In this way, we have handled the case $(2)$.

\textbf{Case (3)}: there are three contact points between $\partial L$ and $\partial(-2K)$. Unfortunately, in this case, we have to assume additionally that $L$ is symmetric. But for a moment, let $L$ still be arbitrary. Let $\partial L \cap \partial (-2K) = \{ x_1,\, x_2,\ x_3\}$ and let $w_i \in K$ be such that $\langle x_i, w_i \rangle = -2$ for $1 \leq i \leq 3$. Note that $A_{x_i} \neq \{u_1, u_3\}$ for each $1 \leq i \leq 3$. Otherwise, the segment $[u_1, u_3]$ would be contained in the boundary of $K$, but it is impossible, as it is a diagonal. For the same reason $A_{x_i} \neq \{u_2, u_4\}$ for each $1 \leq i \leq 3$. Without loss of generality, we can therefore assume that $A_{x_1} = \{u_3, u_4\}$, $A_{x_2} = \{u_2, u_3\}$, $A_{x_3} = \{u_1, u_2\}$. By $x_4$ we denote the intersection of supporting lines at $u_2$ and $u_3$, that is $x_4$ satisfies $\langle x_4, v_2 \rangle = \langle x_4, v_3 \rangle = 1.$

The segments $[u_1, u_2]$, $[u_2, u_3]$ and $[u_3, u_4]$ are contained in $\partial K$, and the remaining part of the $\partial K$ is a convex curve contained in the triangle $\conv\{u_1, x_2, u_4\}$. On the other hand, $\partial L$ contains the segments $[x_1, x_2]$, $[x_2, x_3]$, $[x_3, u_3]$, $[x_1, u_2]$ and the remaining part of the boundary is a convex curve contained in the triangle $\conv \{u_2, u_3, x_4\}$. It is clear that $x_1 \neq -2u_3$ and $x_1 \neq -2u_4$, as we have already observed that there is a unique vector $w_1 \in K$ such that $\langle x_1, w_1 \rangle = -2$, which is parallel to $u_3u_4$ and is, in particular, different from $v_3$ and from $v_4$. For the same reason $x_2 \neq -2u_2$, $x_2 \neq -2u_3$, $x_3 \neq -2u_1$ and $x_3 \neq -2u_2$. It means that $x_1, x_2, x_3$ lie in the interiors of the segments contained in the boundary of $-2K$. See Figure \ref{gj4k3} for an illustration of this configuration.

\begin{figure}[h]
\centering
\includegraphics[width=\textwidth]{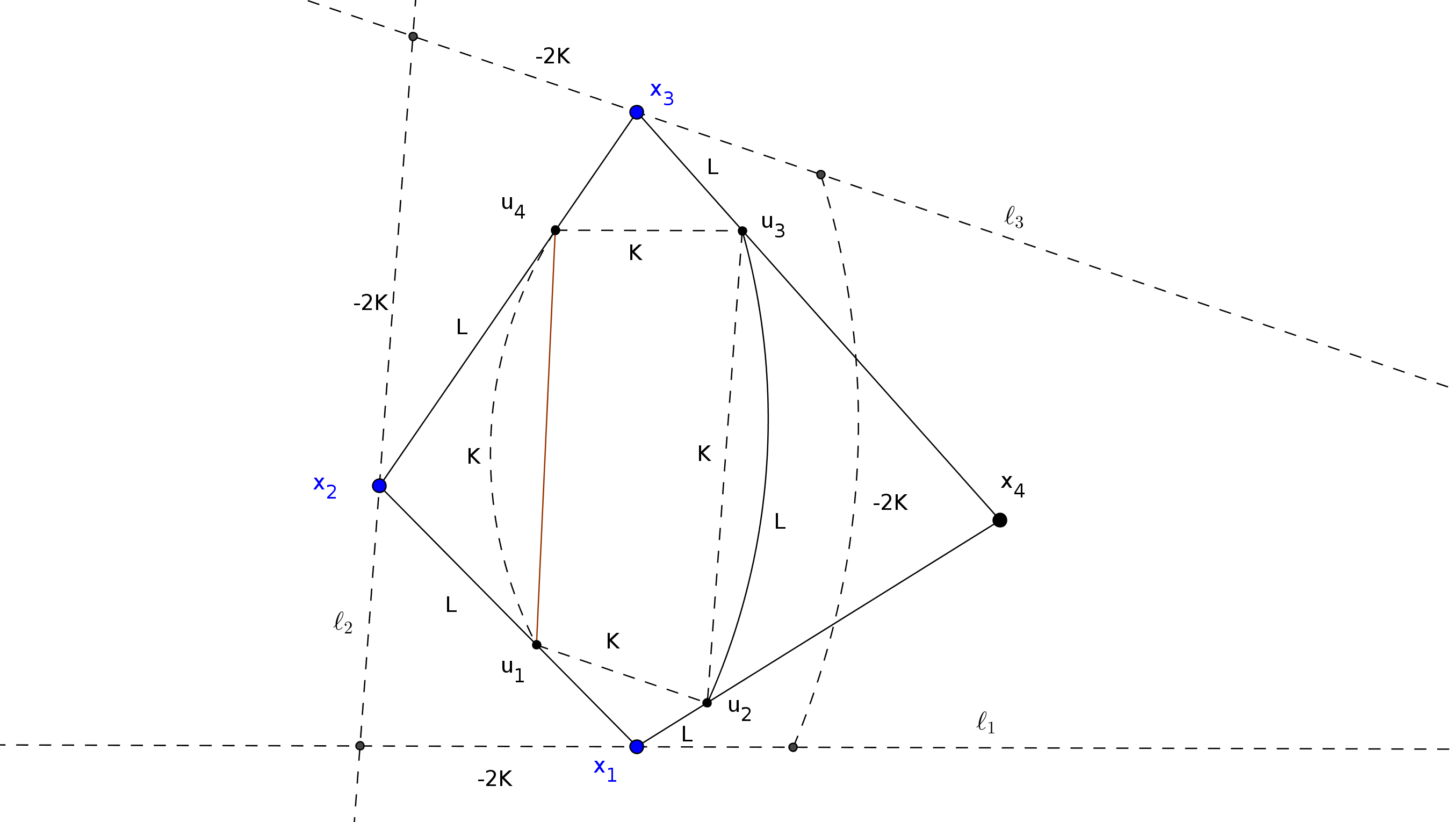}
\caption{Position of convex bodies $K \subset L \subset -2K$, with $x_1$, $x_2$, $x_3$ as contact points of the boundaries of $L$ and $-2K$. The segments $[x_1, x_2]$, $[x_2, x_3]$, $[x_3, u_3]$, $[x_1, u_2]$ are contained in the boundary of $L$ and the segments $[u_1, u_2]$, $[u_2, u_3]$ and $[u_3, u_4]$ are contained in the boundary of $K$. The boundaries of $K$ and $-2K$ are marked with a dotted line.}
\label{gj4k3}
\end{figure}

Let $\ell_1, \ell_2, \ell_3$ be supporting lines to $L$ and $-2K$ at the points $x_1, x_2, x_3$ respectively. That is, $\ell_1 \parallel u_3u_4$, $\ell_2 \parallel u_2u_3$, $\ell_3 \parallel u_1u_2$. Suppose first that $\ell_1, \ell_3$ are parallel. The quadrilateral $u_1u_2u_3u_4$ is in this case a trapezoid. We shift both $K$ and $L$ by a small vector in the direction $\ell_1$ and away from $\ell_2$, obtaining convex bodies $K'$ and $L'$. Because there is a positive distance between part of $\partial L$ contained in $\conv \{u_2, u_3, x_4\}$ and the boundary of $-2K$, for a shift that is small enough we will have $K' \subset L' \subset -2K$ with $x_1, x_3$ as contact points of the boundaries of $L'$ and $\partial(-2K)$. From this moment, we can use exactly the same argument as in the case (2a) of two points of contact between the boundaries of $L$ and $-2K$. We obtain an affine copy $T(L')$ such that both contact points $x_1, x_3$ disappear and this is again a contradiction with Lemma \ref{convex} combined with the assumption $d_G(K, L)=2$. In other words, the case $\ell_1 \parallel \ell_3$ reduces to the previous one (see Figure \ref{gj4k3rownolegle}). 

\begin{figure}[h]
\centering
\includegraphics[width=\textwidth]{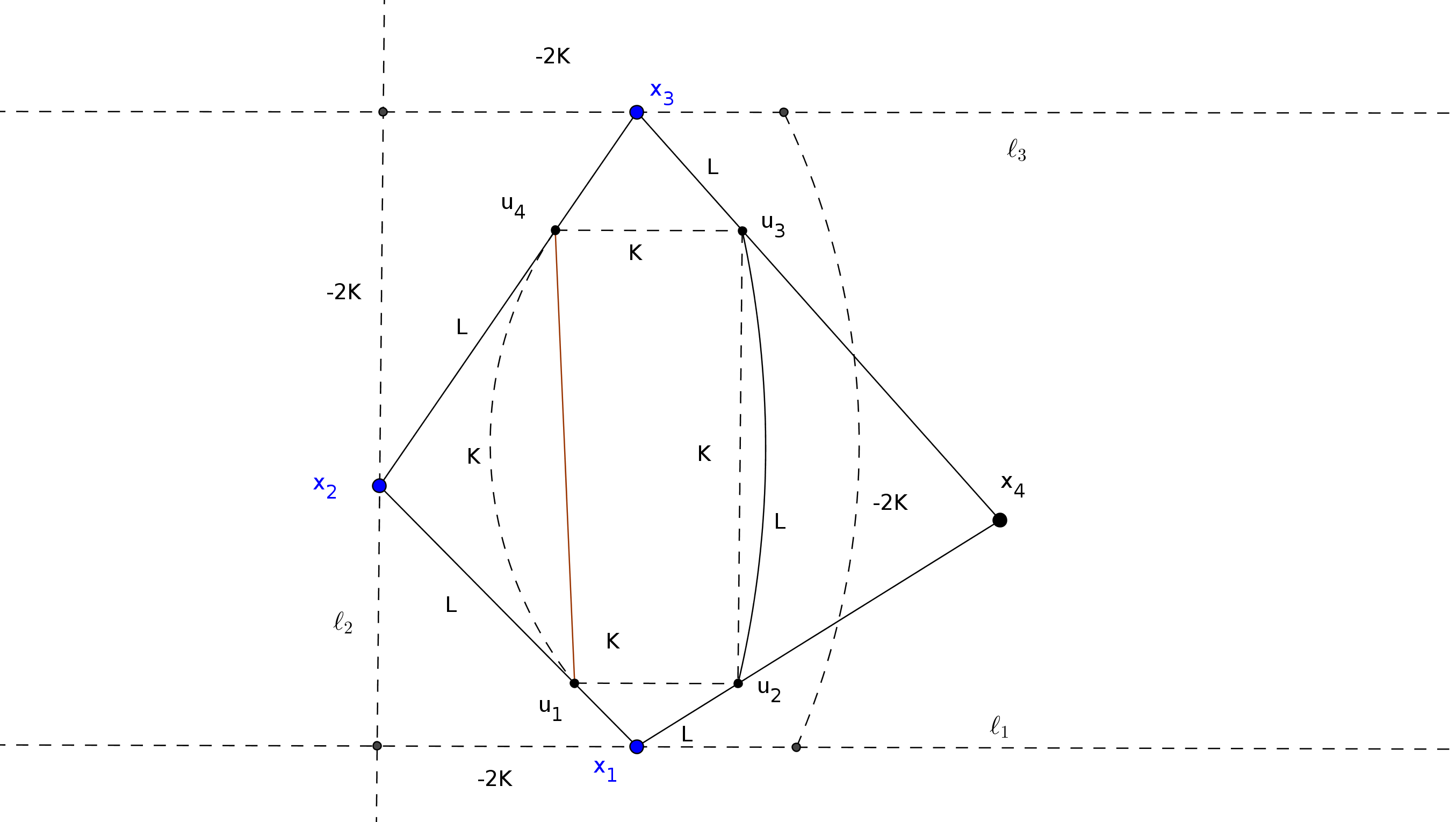}
\caption{Position of convex bodies $K \subset L \subset -2K$, with $x_1$, $x_2$, $x_3$ as contact points of the boundaries of $L$ and $-2K$ and $\ell_1 \parallel \ell_3$. By performing a small shift to the right along the $\ell_1$ we can reduce the contact point $x_2$ and we are able to apply the same reasoning as in the previous case.}
\label{gj4k3rownolegle}
\end{figure}

If the intersection of lines $\ell_3$ and $\ell_3$ lies in the other side of $\ell_2$ than $K$ and $L$, the argument gets even easier. In this case, after performing a small shift of $K$ and $L$ in the direction determined by $\ell_1$, the points $x_2, x_3$ will move away from the boundary of $-2K$. Thus, the number of contact points of $L$ and $-2K$ is reduced to one, which again gives a contradiction with Lemma \ref{convex} (see Figure \ref{gj4k3latwy}).

\begin{figure}[h]
\centering
\includegraphics[width=\textwidth]{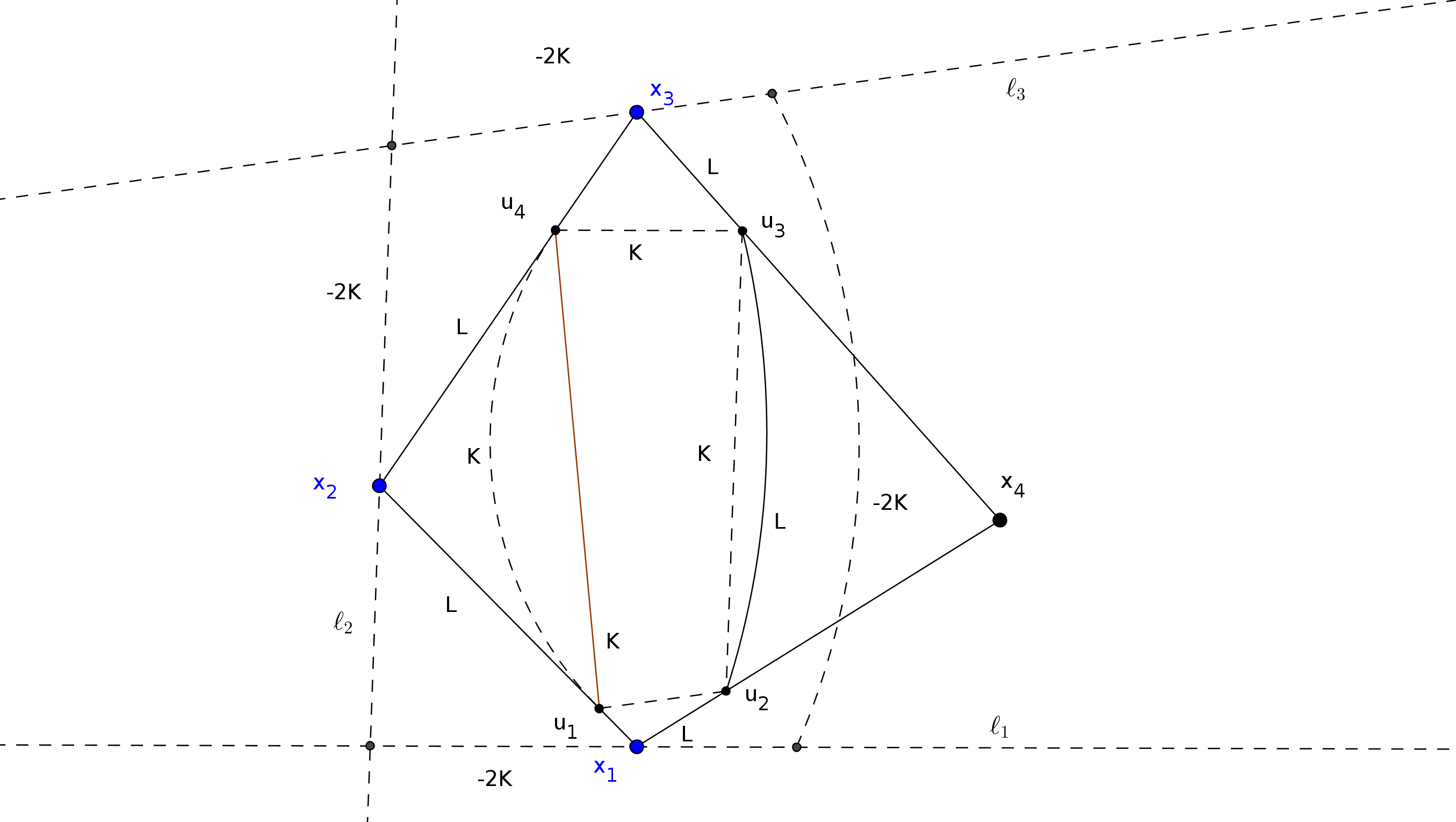}
\caption{Position of convex bodies $K \subset L \subset -2K$, with $x_1$, $x_2$, $x_3$ as contact points of the boundaries of $L$ and $-2K$ and with the intersection of $\ell_1$ and $\ell_3$ to the left of $\ell_2$. By performing a small shift to the right along the $\ell_1$, we reduce contact points $x_2, x_3$ and we reach a contradiction immediately.}
\label{gj4k3latwy}
\end{figure}

Therefore, the only interesting possibility is when $\ell_1$ and $\ell_3$ do intersect in the same side of $\ell_2$ as $K$ and $L$ are contained. This configuration is demonstrated in Figure \ref{gj4k3}. In this situation, it is not clear how to transform $L$ to reduce the contact points. However, if we assume additionally that $L$ is symmetric, then it becomes apparent that $L$ has to be the parallelogram. Indeed, let $o$ be the center of symmetry of $L$. The reflection $2o-x_1$ belongs to the boundary of $L$ and it has a supporting line parallel to $\ell_1$. Looking at the boundary of $L$ we see that the only suitable point is $x_3$. Thus, $o$ is the midpoint of the segment $[x_1, x_3]$. Moreover, the reflections of the segments $[x_1, x_2]$ and $[x_2, x_3]$ are also in the boundary and it is clear that they have to be mapped to $[x_3, x_4]$ and $[x_1, x_4]$ respectively. Thus $L=\{x_1, x_2, x_3, x_4\}$ is the parallelogram and this case was already covered by Lassak in \cite{lassakpar}. This finishes proof of Theorem.

\qed

\begin{remark}

We note that if $S=S_2$ is the triangle in $\mathbb{R}^2$ and $K$ is a planar convex body such that $d_G(K, S) = 2$, then $K$ does not have to be symmetric. It is well-known that if $K \subset S \subset -rK + v$, then we must have $r \geq 2$ (this is equivalent to the fact that the \emph{asymmetry constant} of the triangle is equal to $2$). On the other hand, $d_G(K, S) \leq 2$ by Theorem \ref{glmp}. Thus, for any convex body $K$ such that $d(K, S) > 2$ we have $d_G(K, S) = 2$. One could take for example the regular pentagon as $K$ and then $d(K, S) = 1 + \frac{\sqrt{5}}{2} > 2$. Moreover, by using a continuous deformation (with the respect to the Banach-Mazur distance) of the regular pentagon to the triangle, one can see that we can even find a convex pentagon $K$ such that $d_G(K, S) = d(K, S) = 2$.
\end{remark}

\section{Concluding remarks}
\label{concluding}
In the proof of Theorem \ref{twglowne} we were able to show that in most situations, if $K \subset L$ are in the John's position and $d_G(K, L)=2$, then we can find a small affine perturbation $L'$ of $L$ such that $K \subset L' \subset -rK+v$ for some $r<2$. We can therefore say that, in some sense, our reasoning was local. It is not clear, if such a local reasoning is enough for the case (3) of $4$ contact pairs in the John's position of $K$ and $L$ and $3$ contact points of $L$ and $-2K$. It would be very interesting to know, if for this remaining case, the same strategy could be applied or some different approach is necessary. It is also well possible, that there are some convex bodies $K$ and $L$, both different from the triangle, such that $d_G(K, L)=2$. In such case, the counterexample could be as simple as some convex quadrilateral and convex pentagon. We also remark that if our goal was to prove the theorem only for centrally-symmetric $L$, some steps of the proof could be simplified. At this moment, our approach seems to be too complicated to be used in higher dimensions in the full generality. But it seems well possible, that for some particular cases, the idea of lowering the distance by a small affine perturbations could be applied.

There are still many open and interesting questions concerning Banach-Mazur distance, even in the planar case. One natural question that seems particularly important for understanding the structure of the Banach-Mazur compactum is the following: what is the isometry group of the Banach-Mazur compactum? This question could be asked in the symmetric case, in the general case and also for the variant of the Gr\"unbaum distance. Similar results for the Hausdorff distance and the symmetric-difference distance were established (see \cite{gruber1} and \cite{gruber2}), but the case of the Banach-Mazur distance seems much more difficult, even in the two-dimensional setting. It seems plausible that the identity and the dual map should be the only isometries in the symmetric case and the identity should be the unique isometry in the non-symmetric case and for the Gr\"unbaum distance.

\end{document}